\newcommand{\R}{{\mathbb R}}
\newcommand{\B}{\bigskip}
\newcommand{\m}{\medskip}
\newcommand{\bb}{_\bullet}
\newcommand{\cal}{\mathcal}
\newtheorem{theorem}{Theorem}[section]
\newtheorem{cor}{Corollary}
\newtheorem{prop}[theorem]{Proposition}
\theoremstyle{definition}
\newtheorem{definition}[theorem]{Definition}
\newtheorem{example}[theorem]{Example}
\newtheorem{remark}{Remark}
\numberwithin{equation}{section}
\numberwithin{equation}{section}
\def\sqr#1#2{{\vcenter{\vbox{\hrule height.#2pt
 \hbox{\vrule width.#2pt height#1pt \kern#1pt
 \vrule width.#2pt}
 \hrule height.#2pt}}}}
\title{A Combinatorial Approach to Binary Positional Number Systems} 
\author[A. Vince]{Andrew Vince}
 \address{Department of Mathematics, University
of Florida \\ Gainesville, FL, USA}
\email{avince@ufl.edu }
\begin{document}

\begin{abstract}  Although the representation of the real numbers in terms of a base and
a set of digits has a long history,  new questions arise even in the binary case - digits $0$ and $1$.  
A binary positional number system (binary radix system) with base equal the golden ratio $(1+\sqrt{5})/2$ 
is fairly well known.  The main result of this paper is a construction of infinitely many binary radix systems, each one constructed  combinatorially from a single pair of binary strings. Every binary radix system that satisfies even a minimal set of conditions that would be expected of a positional number system, can be constructed in this way.  
\end{abstract}

\maketitle


\section{Introduction}  \label{secIntro}

The terms {\it positional number system, radix system}, and $\beta$-{\it expansion} that appear in the literature all refer to the representation of real numbers  in terms of a given base or radix $B$ and a given finite set $D$ of digits.  Historically the base is $10$ and the digit set is $\{0,1,2, \dots , 9\}$ or, in the binary case, the base is $2$ and the digit set is $\{0,1\}$.  Alternative choices for the set of digits goes back at least to Cauchy, who suggested the use of negative digits in base $10$, for example $D = \{-4,-3, \dots , 4,5\}$.  The {\it balanced ternary system} is a base $3$ system with digit set $D = \{-1,0,1\}$ discussed by Knuth in \cite{K2}.  In the balanced ternary, every integer, positive or negative, has a representation of the  form $\sum_{n=0}^{N}\omega_n \, 3^n$,
where $\omega_n \in \{-1,0,1\}$ for all $n$.  A well known system with digit set $\{0,1\}$ and base equal the golden ratio $(1+\sqrt{5})/2$ originated with  \cite{B}.  Other early work on the representation of numbers using a non-integer base include those of  R\'enyi \cite{R} and Parry \cite{P}.  Positional number systems  have also been extended to the representation of complex numbers \cite{K1} and, more generally, to the representation of points in $\R^d$ \cite{V}. 
A vast array of additional references on positional number systems can be accessed by searching on the terms `radix system' and `$\beta$-expansion'.

Our intention in this paper is to provide a combinatorial framework for the binary representation of the real numbers. By binary we mean with digit set $\{0,1\}$.  A precise definition of a {\it binary radix system} appears in Section~\ref{sec:Interval} (Definition~\ref{def:radixR}).   Starting from any  pair of infinite strings of $0$'s and $1$'s that satisfy a few combinatorial conditions given in Section~\ref{sec:admissible},  a binary radix system is constructed (Theorem~\ref{thm:radix} and Theorem~\ref{thm:radixR}).  Conversely, any binary radix system  can be obtained by this   construction (Theorem~\ref{thm:converse} and Theorem~\ref{thm:converseR}).  This leads to infinitely many binary radix systems, some of whose properties are investigated in this paper. 

The organization of this paper is as follows.  The notion of an {\it admissible pair} $(\alpha, \beta)$ of strings is defined in Section~\ref{sec:admissible}.  The construction of a binary radix system on the interval $[0,1]$ from
an admissible pair of binary strings is the subject of Section~\ref{sec:Radix}.   Moreover, every binary radix system on $[0,1]$ can be obtained by this construction from some admissible  pair $(\alpha, \beta)$.  Section~\ref{sec:reals} extends these two results of Section~\ref{sec:Radix} to radix systems for the set ${\R}^+$ of non-negative real numbers.  Section~\ref{sec:proofs} contains proofs of results in the previous two sections.  An algorithm is provided in Section~\ref{sec:alg} whose input is a particular binary radix system and a positive real number $x$ and whose output is the decimal expansion of $x$ in that binary radix system.  In addition, it is shown that, not only are there infinitely
many radix systems for ${\R}^+$, but there are infinitely many radix systems for any given base between $1$ and $2$.  For the standard base $2$ radix system there is an associated tiling of the real line, where a tile consists of all those points on the real line with the same integer part.  This is the trivial tiling consisting of unit length intervals.  There is an an analogous tiling for any binary radix system but, in general, the tiles have various lengths, and the tiling is self-replicating but not periodic.  This is the subject of Section~\ref{sec:tiling}.  

In a sense, this paper is a companion to the previous paper \cite{BV}, and we refer to results in that paper in some of the proofs in this paper.  

\section{Binary Radix Systems} \label{sec:Interval}

 Let  $\Omega = \{0,1\}^{\infty}$ denote the set of infinite strings of
the form  $\omega := \omega_0\, \omega_1 \, \omega_2 \cdots$,  where $\omega_n \in \{0,1\}$ for all $n\geq 0$.  
 A line over a finite string denotes infinite repetition, for example $\overline{01} = 010101\cdots$.  The {\it lexicographic order} $\preceq$ on $\Omega$ is the total order defined by  $\sigma \prec\omega$ if $\sigma\neq\omega$ and $\sigma_{k}<\omega_{k}$ where $k$ is the least index such that $\sigma_{k}
\neq\omega_{k}$. 

\begin{definition}  The {\bf shift operator} $S: \Omega \rightarrow \Omega$ is defined by 
$$S(  \omega_0\, \omega_1 \omega_2 \cdots )= \omega_1 \,\omega_{2} \cdots.$$   Let $S^n$ denotes the $n^{th}$ iterate of $S$ for all $n\geq 0$.  A subset $\Gamma \subset \Omega$ is {\bf shift invariant} if $S^n (\Gamma) \subseteq \Gamma$ for all $n\geq 0$.  
\end{definition}

Extend $\Omega$ to a set $\Omega^{\bullet}$ of {\it decimals} by adding a ``decimal point" as follows:
$$ \Omega^{\bullet}  := \left \{ \omega_0\, \omega_1 \cdots \omega_{N}\, \bb  \, \omega_{N+1} \, \omega_{N+2} \cdots \, : \,  \omega_0 \,\omega_1 \, \omega_2 \cdots \in\Omega \right \}.$$
 When no confusion arises, we omit an initial string of zeros (before the decimal point) and/or a terminal string of zeros (after the decimal point), for example $01\bb 1000\cdots = 1\bb 1$.

 The lexicographic order on $\Omega$ can be extended to $ \Omega^{\bullet}$ as follows. If $\omega \in  \Omega^{\bullet}$, let ${\widehat \omega} \in \Omega$ denote the string
 $\omega$ with the decimal point removed.  If 
 $$\begin{aligned} \sigma &=  \sigma_0\, \sigma_1 \cdots \sigma_{N}\, \bb \, \sigma_{N+1} \, \sigma_{N+2} \cdots \quad \text{and} \\ \omega &= 
  \omega_0\, \omega_1 \cdots \omega_{N}\, \bb  \, \omega_{N+1} \, \omega_{N+2} \cdots\end{aligned} $$
(where some leading entries may possibly be $0$'s), then define
$\sigma \prec \omega$ if $\sigma \neq \omega$ and ${\widehat \sigma} \prec {\widehat \omega}$ in the lexicographic order on $\Omega$.  For example $\bb \overline{1} \prec 1\bb \overline{0}$ because 
 $0\bb\overline{1} \prec 1\bb\overline{0}$ because $0\overline{1} \prec 1\overline{0}$.

\begin{definition}  A subset $\Gamma \subset \Omega^{\bullet}$ will be called 
{\bf shift invariant} if, whenever $\omega \in \Gamma$, for any $n\geq 0$, any decimal obtained by placing the decimal point at any posiltion in $S^n({\widehat \omega})$ (introducing zeros if needed), is also in  $\Gamma$.  
\end{definition}

The  conditions in the definition below are meant to be the minimum requirements that one would expect of a positional number system for the set $\R^+$  of non-negative real numbers. 

\begin{definition} \label{def:radixR}
A {\bf binary radix system} for $\R^+$  is a pair $(\Gamma, B)$, where $B>1 $ is a real number called the {\bf base} and $\Gamma \subset \Omega^{\bullet}$ is called the {\bf address space}.   The following conditions must be satisfied:
\begin{enumerate}
\item $\Gamma$ is shift invariant, and
\item the map  $\overset{\bullet}{\pi} \,: \, \Gamma \rightarrow \R^+$  defined by
\begin{equation} \label{eq:radixM2} \overset{\bullet}{\pi} ( \omega_0\, \omega_1 \cdots \omega_{N}\, \bb  \, \omega_{N+1} \, \omega_{N+2} \cdots)=  \sum_{n=-\infty}^{N} \omega_ {N-n}\, B^n \end{equation}
 is bijective and strictly increasing.  The map $\overset{\bullet}{\pi}$ is called the {\bf radix map}.
\end{enumerate} 
\end{definition}

Condition (1) is a consistency requirement.  In the standard base two radix system, for example, if 
$1$ is in the address space, then so must be $$\dots, \, _{\bb}0001, \,_{\bb}001, _{\bb} 01, \,_{\bb} 1, \, 1_{\bb}, 
\, 10_{\bb}, \,100_{\bb}, \, 1000_{\bb}, \, \dots, $$ and if $_{\bb}111\cdots$ is in the address space, then so must be
$$\dots, \, _{\bb}000\overline {1}, \, _{\bb}00\overline {1}, \, _{\bb} 0\overline {1}, \, _{\bb} \overline {1},  \, 1_{\bb}\overline {1}, \, 11_{\bb}\overline {1}, \, 111_{\bb}\overline {1}, \, 1111_{\bb}\overline {1}, \, \dots.$$

A binary radix system for the real numbers in the unit interval $[0,1]$, for technical reasons that
should become clear subsequently, is slightly different.  

\begin{definition} \label{def:radix}
A {\bf binary radix system} for $[0,1]$ is a pair $(\Gamma, B)$, where $B>1 $ is a real number called the {\bf base} and $\Gamma \subset \Omega$ is called the {\bf address space}.  The following conditions must be satisfied, where  $b = 1/B$:
\begin{enumerate}
\item $\Gamma$ is shift invariant, and
\item the map  $\pi \,: \, \Gamma \rightarrow [0,1]$  defined by
\begin{equation} \label{eq:radixM1} \pi ( \omega_0\, \omega_1 \, \omega_{2} \cdots)=  (1-b) \,\sum_{n=0}^{\infty} \omega_ n \, b^n \end{equation}
 is bijective and strictly increasing.  The map $\pi$ is called the {\bf radix map}.
\end{enumerate} 
\end{definition} 

A remark on the factor $1-b$ in Equation~\eqref{eq:radixM1} is in order.  If $B=2$, the standard binary base, then $b = 1/2$ and 
 $$\pi ( \omega_0\, \omega_1 \, \omega_{2} \cdots)=  \sum_{n=1}^{\infty} \omega_{ n-1} \, \left (\frac12 \right )^n,$$
so that $. \omega_0\, \omega_1 \, \omega_{2} \cdots$ is the usual decimal represention of a real number in the interval $[0,1]$.  We hope that, by the end of the paper, the reader will be convinced that the factor $1-b$ is natural. 

\begin{example} [{\it Standard Binary Radix System}] \label{ex:1}
 The standard binary radix system on $[0,1]$ is actually two examples.
Let $B = 2$ and let $\Gamma$ consist of all strings in $\Omega$ except those ending in $0111\cdots$. Then  $(\Gamma, B)$ is a binary radix system for $[0,1]$.  Alternatively, with the same base $B=2$, let $\Gamma$ consist of all strings in $\Omega$ except those ending in $1000\cdots$. Again  $(\Gamma,B)$ is a binary radix system for $[0,1]$.   Note that an address space $\Gamma$ with base $B= 2$ cannot contain both an element that ends in $1\overline 0$ and an element that ends in $0 \overline 1$. This is because, {\it by shift invariance}, if this were so, then both $1\overline 0$ and $0 \overline 1$ must lie in $\Gamma$.  But $\pi( 1\overline 0) = \pi(0 \overline 1)$, contradicting condition (2), that $\pi$ is bijective.  It is precisely to avoid  this type of inconsistency, i.e., that some numbers are represented by a decimal ending in $1\overline 0$ and others are represented by a decimal ending in $0\overline 1$, that we require shift invariance as a condition for a binary radix system.  
\end{example}  

\begin{example} [{\it Golden Ratio Radix System}]  \label{ex:2} 
In the introduction we referred to a binary radix system  $(\Gamma,B)$, where $B$ is the golden ratio $\tau = (1+\sqrt{5})/2$.  For such a radix system on the interval $[0,1]$, the set $\Gamma$ consists of all strings in $\Omega$ that do not contain $011$ or $\overline{01}$ as a substring.  The radix map  $\pi\,:\, \Gamma \rightarrow [0,1]$ is as follows, where
$\overline{\tau} := \frac{1}{\tau} = (\sqrt{5}-1)/2$:
$$\pi( \omega) = (1-\overline{\tau}) \,\sum_{n=0}^{\infty} \omega_ n \, \overline{\tau}^n = 
\sum_{n=2}^{\infty} \omega_ {n-2} \, \overline{\tau}^n.$$  That $\pi$ is bijective and strictly increasing is a special case of general results in Section~\ref{sec:Radix}. 
\end{example} 

\section{Allowable and Admissible Pairs} \label{sec:admissible}

In this section two related terms are defined, {\it allowable} and {\it admissible} pairs of strings.
For $\alpha, \beta \in \Omega$, the notation $$[\alpha, \beta] := \{ \omega \in \Omega \, : \, \alpha \preceq \omega \preceq \beta\}$$ will be used for a closed interval in $\Omega$;  likewise for the open interval $(\alpha, \beta)$ and half open intervals $[\alpha, \beta)$ and $(\alpha, \beta]$. 

\begin{definition} \label{def:exp}  For $\omega \in \Omega$, let $\omega|n = \omega_0 \omega_1 \cdots \omega_n$.
For $\Gamma \subseteq \Omega$, let 
$$\Gamma_n = \{\omega | n \, : \, \omega \in \Gamma\}$$ and let  $|\Gamma_n|$ denotes the cardinality of $\Gamma_n$.
Define the {\bf exponential growth rate} $h(\Gamma)$ of $\Gamma \subseteq \Omega$ by
$$h(\Gamma) = \limsup_{n\rightarrow \infty} \frac1n \, \log | \Gamma_n |.$$ 
\end{definition}

\begin{definition} \label{def:address}
The {\bf address spaces} associated with a pair $(\alpha,\beta)$ of strings in $\Omega$ are defined by
$$\begin{aligned}\Omega_{(\alpha, \beta,-)} &:= \{\omega\in \Omega \, : \, S^n\omega \notin (\alpha,\beta] \; \text{for all} \; n\geq 0\}, \\ 
\Omega_{(\alpha, \beta,+)} &:= \{\omega\in \Omega \, : \,  S^n\omega \notin [\alpha,\beta)\; \text{for all} \; n\geq 0\}, \\ \Omega_{(\alpha, \beta)} &:=\Omega_{(\alpha,\beta,-)}\cup \Omega_{(\alpha,\beta,+)}.\end{aligned}$$
\end{definition}

\begin{definition} \label{def:allowable}
Call a pair  $(\alpha,\beta)$ of elements of $\Omega$ {\bf allowable}
if $\alpha$ and $\beta$ satisfy the following three conditions.
\begin{enumerate}
\item $\alpha_0 = 0,\; \alpha_1 =  1 \quad$ and $\quad \beta_0 = 1, \; \beta_1 = 0$,
\item $S^n\alpha \notin (\alpha,\beta]\quad$ and $\quad S^n\beta \notin [\alpha,\beta)\quad $ for all $n\geq 0$,
\item $h(\Omega_{(\alpha, \beta)}) > 0$.
\end{enumerate}
\end{definition}

\noindent There exist pairs that satisfy conditions (1) and (2), but not (3);  see \cite{BV} for examples.

\begin{example} \label{ex:11}  If $\alpha = 0\overline{1} = 0\, 1 \,1 \,1  \cdots$ and $\beta =  1\overline{0}= 1\,0\,0\,0\cdots$, then $(\alpha, \beta)$ is an admissible pair.   The address space $\Omega_{(\alpha, \beta,-)}$ consists of all strings that do not end in $1000\cdots$, and  $\Omega_{(\alpha, \beta,+)}$ consists of all strings that
do not end in $0111\cdots$.  Notice that    $\Omega_{(\alpha, \beta,-)}$ and  $\Omega_{(\alpha, \beta,+)}$ are exactly the two possible address spaces for the standard binary radix systems of Example~\ref{ex:1}.
\end{example}

\begin{example} \label{ex:22}  If $\alpha =\overline{01}$ and $\beta = 1 \overline 0$, then $(\alpha, \beta)$ is an
admissible pair.   Notice that  $\Omega_{(\alpha,\beta,-)}$ is exactly the address space of the
golden ratio based binary radix system of Example~\ref{ex:2}.  The address space   $\Omega_{(\alpha,\beta,+)}$
consists of all strings not containing $011$ or $1\overline 0$  as substrings, and   $\Omega_{(\alpha,\beta)}$
consists of all strings not containing $011$  as a substring.
 \end{example}

\begin{definition} \label{def:periodic}
 A string $\omega \in \Omega$ will be called  {\bf periodic} if $\omega = \overline{s}$ for some finite string $s$.  A string $\omega \in \Omega$ will be called {\bf eventually periodic} if there is an $m \geq 0$ such that $S^m\omega$ is periodic. 
\end{definition}

The fact that the address space $\Omega_{(\alpha, \beta)}$ in Example~\ref{ex:22} can be characterized by a set of ``forbidden" finite substrings ($011$ in that case) is not a coincidence, as explained in the next proposition.  

\begin{prop} \label{prop:A}  If an admissible pair  $(\alpha, \beta)$ is such that $\alpha$ and $\beta$ are periodic, then there is a finite set $T$ of finite strings such that  $\Omega_{(\alpha, \beta)}$ is the set of $\omega \in \Omega$ such that $s$ is not a substring of $\omega$ for any $s\in T$. 
\end{prop}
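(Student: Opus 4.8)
The plan is to strip off the decimal point, express $\Omega^{0}_{(\alpha,\beta)}$ as the intersection of two lexicographically–defined subshifts, and then use periodicity to see that each of these is a subshift of finite type. Since $\omega\in\Omega^{\bullet}_{(\alpha,\beta)}$ if and only if $\widehat\omega\in\Omega^{0}_{(\alpha,\beta)}$ (this is just unwinding $[\,\cdot\,]^{\bullet}$), and a ``substring of $\omega$'' is a substring of $\widehat\omega$, it is enough to produce a finite set $T$ of finite strings with $\Omega^{0}_{(\alpha,\beta)}=\{v\in\Omega:\text{no }s\in T\text{ is a substring of }v\}$. The first substantive step is the identity
\[
\Omega^{0}_{(\alpha,\beta)}=\bigl\{\,v\in\Omega:\ S^{n}v\notin(\alpha,\beta)\ \text{and}\ S^{n}v\preceq S\alpha\ \text{for all }n\ge 0\,\bigr\}.
\]
Four ingredients enter: (a) admissibility condition~2 forces $\Omega_{(\alpha,\beta,-)}\cup\Omega_{(\alpha,\beta,+)}=\{v:S^{n}v\notin(\alpha,\beta)\ \forall n\}$ — if some $v$ in the right side lay in neither set, a shift of $v$ would equal $\beta$ and another equal $\alpha$, and composing the intervening shift with $S^{m}\alpha\notin(\alpha,\beta]$ or $S^{m}\beta\notin[\alpha,\beta)$ is contradictory; (b) since $\alpha_{0}=0$, the clauses $0v\preceq\alpha$ and $0v\prec\alpha$ in the definitions of $\Omega^{0}_{(\alpha,\beta,-)}$ and $\Omega^{0}_{(\alpha,\beta,+)}$ read simply $v\preceq S\alpha$ and $v\prec S\alpha$; (c) $S\alpha\in\Omega_{(\alpha,\beta,-)}$ always (condition~2 applied to shifts $\ge 1$), which disposes of the boundary case $v=S\alpha$; and (d) $\Omega^{0}_{(\alpha,\beta)}$ is shift invariant (Proposition~\ref{prop:shift}) — which also follows directly, since if $v$ avoids $(\alpha,\beta)$ under all shifts, $v\preceq S\alpha$, but $S^{m}v\succ S\alpha$ for the least $m\ge1$, then inspecting the initial symbols of $S^{m-1}v$ produces either a shift of $v$ strictly inside $(\alpha,\beta)$ or a shift of $v$ strictly above $S\alpha$ at time $m-1$, contradicting minimality.

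Writing $Y_{1}=\{v:S^{n}v\notin(\alpha,\beta)\ \forall n\}$ and $Y_{2}=\{v:S^{n}v\preceq S\alpha\ \forall n\}$, we then have $\Omega^{0}_{(\alpha,\beta)}=Y_{1}\cap Y_{2}$, and each factor is cut out by forbidden substrings, using only admissibility: $v\in Y_{1}\cap Y_{2}$ iff $v$ contains none of the words $\alpha_{0}\alpha_{1}\cdots\alpha_{k-1}1$ with $k\ge2,\ \alpha_{k}=0$; $\beta_{0}\beta_{1}\cdots\beta_{\ell-1}0$ with $\ell\ge2,\ \beta_{\ell}=1$; and $\alpha_{1}\alpha_{2}\cdots\alpha_{j}1$ with $j\ge1,\ \alpha_{j+1}=0$. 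Indeed such a factor at position $n$ forces, respectively, $\alpha\prec S^{n}v\prec\beta$ or $S^{n}v\succ S\alpha$; conversely, if some $S^{n}v$ lies in $(\alpha,\beta)$ or exceeds $S\alpha$, then reading its first few symbols (and comparing with $\alpha$, $\beta$, or $S\alpha$ at the first disagreement) exhibits one of these words at position $n$. So far the lists of forbidden words are infinite, and this description holds for every admissible pair.

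The crux — and the step I expect to be the main obstacle — is that periodicity collapses these infinite lists to finite ones. Say $S^{p}\alpha=\alpha$ and $S^{q}\beta=\beta$. I would show that any forbidden word of length exceeding $p+1$ (resp.\ $q+1$, resp.\ $p$) in the three families contains a strictly shorter forbidden word of the same family, via a cyclic-suffix argument: for $\alpha_{0}\cdots\alpha_{k-1}1$ with $k>p$, periodicity gives $\alpha_{k}=\alpha_{r}$ where $k=mp+r$, $m\ge1$, $0\le r<p$, and since $\alpha_{k}=0$ while $\alpha_{1}=1$ we get $r\ne1$; if $r\ge2$ the last $r+1$ symbols are $\alpha_{k-r}\cdots\alpha_{k-1}1=\alpha_{0}\cdots\alpha_{r-1}1$ (because $k-r\equiv0\pmod p$), a shorter forbidden word, and if $r=0$ the last $p+1$ symbols are $\alpha_{0}\cdots\alpha_{p-1}1$ (note $\alpha_{p}=\alpha_{0}=0$), again shorter. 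The same argument handles $\beta_{0}\cdots\beta_{\ell-1}0$ (using $q$) and $\alpha_{1}\cdots\alpha_{j}1$ (using $p$; here $\alpha_{j+1}=0$ forces $j\not\equiv0\pmod p$, so $r=j\bmod p\in\{1,\dots,p-1\}$ and the last $r+1$ symbols again form a shorter forbidden word). Iterating, $Y_{1}$ and $Y_{2}$ — and therefore $\Omega^{0}_{(\alpha,\beta)}=Y_{1}\cap Y_{2}$ — are subshifts of finite type. Taking $T$ to be the union of the words $\alpha_{0}\cdots\alpha_{k-1}1$ with $2\le k\le p,\ \alpha_{k}=0$; the words $\beta_{0}\cdots\beta_{\ell-1}0$ with $2\le\ell\le q,\ \beta_{\ell}=1$; and the words $\alpha_{1}\cdots\alpha_{j}1$ with $1\le j\le p-1,\ \alpha_{j+1}=0$, we obtain $\Omega^{0}_{(\alpha,\beta)}=\{v:\text{no }s\in T\text{ is a substring of }v\}$, whence $\Omega^{\bullet}_{(\alpha,\beta)}=\{\omega\in\Omega^{\bullet}:\text{no }s\in T\text{ is a substring of }\omega\}$, as required.
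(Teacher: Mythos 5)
Your overall strategy — reduce to $\Omega^{0}_{(\alpha,\beta)}$, characterize it by an a priori infinite family of forbidden prefixes derived from the first disagreement with $\alpha$, $\beta$, and $S\alpha$, and then use periodicity to show every long forbidden word contains a shorter one as a suffix — is sound, and it is the argument the paper has in mind: the paper's discussion replaces the formal proof with a single worked example ($\alpha=\overline{01101}$, $\beta=\overline{100}$), whose three forbidden words are exactly your families with the periodic truncation applied. Your formalization is in fact more careful than the paper's example: you correctly obtain $1000$ (from $\beta_0\beta_1\beta_2 0$) rather than the paper's $000$, which as written would wrongly exclude $\overline{0}$ (and hence the representation of $0$) from the address space.

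There is, however, a real gap in your ingredient (d). You need $\Omega^{0}_{(\alpha,\beta)}\subseteq Y_2$, i.e. that $v\preceq S\alpha$ together with $S^{n}v\notin(\alpha,\beta)$ for all $n$ forces $S^{n}v\preceq S\alpha$ for all $n$. Your ``direct'' argument says that with $m$ minimal and $w=S^{m-1}v$, inspecting $w_0$ yields either $w\in(\alpha,\beta)$ or $w\succ S\alpha$, contradicting minimality. The first alternative is correct when $w_0=0$: then $w\preceq\alpha$ and $w_0=\alpha_0$ give $Sw\preceq S\alpha$, and if instead $w\succ\alpha$ you get $w\in(\alpha,\beta)$. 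But when $w_0=1$ you do \emph{not} get $w\succ S\alpha$ — minimality gives $w\preceq S\alpha$, and since $w_0=(S\alpha)_0=1$ this only yields $Sw\preceq S^{2}\alpha$. To conclude $Sw\preceq S\alpha$ you need the additional fact $S^{2}\alpha\preceq S\alpha$ (more generally $S^{n}\alpha\preceq S\alpha$ for all $n\ge 0$). This is true — it follows from admissibility by its own induction: for $n\ge 2$ set $u=S^{n-1}\alpha$; if $u_0=0$ then $u\notin(\alpha,\beta]$ and $u\prec\beta$ force $u\preceq\alpha$ hence $Su\preceq S\alpha$, while if $u_0=1$ then $u\preceq S\alpha$ (inductive hypothesis) with $u_0=(S\alpha)_0$ gives $Su\preceq S^{2}\alpha$, and the $n=2$ base case is checked directly from $\alpha_0=0,\alpha_1=1$ — but it must be proved; it is not an immediate consequence of inspecting initial symbols. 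Alternatively, you can lean entirely on Proposition~\ref{prop:shift}, but then you must spell out the passage from shift invariance of $\Omega^{\bullet}_{(\alpha,\beta)}$ (a statement about decimals modulo leading/trailing zeros) to $S\Omega^{0}_{(\alpha,\beta)}\subseteq\Omega^{0}_{(\alpha,\beta)}$, which requires verifying that $\Omega^{0}_{(\alpha,\beta)}$ is closed under both prepending and deleting a leading $0$. With either repair, the rest of your argument — the three families, the cyclic-suffix reduction, and the final finite $T$ — goes through.
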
 

More instructive than giving a formal proof of the proposition is to give an example that makes clear how a proof would proceed. Referring to 
Definition~\ref{def:allowable} of allowable pair,  if $\alpha = \overline{01101}$ and $\beta = \overline{100}$, then the forbidden set is $T = \{ 0111, 011011, 1000\}$.  
The string $0111$ is not allowed because $0111\gamma \succ \alpha$ for any string $\gamma$, and $011011$ is not allowed because  $011011\gamma \succ \alpha$ for any string $\gamma$.  The string $000$ is not allowed because
 $1000\gamma \prec \beta$ for any string $\gamma$.  Because of the periodicity, no additional forbidden strings are
required. 

Note that  Proposition~\ref{prop:A}  is false if, instead of periodic, the hypothesis assumes only eventually periodic.  For example, if $\alpha = 011\overline{01}$ and
$\beta = 1\overline0$, then we would have to forbid an infinite set $0111, 011011, 01101011, 0110101011, \dots$.

A proof of the following theorem appears in \cite{BV}, a main result in that paper. 

\begin{theorem} \label{thm:BV} If $(\alpha,\beta)$ is allowable, then the equation 
 \begin{equation} \label{eq:series} \sum_{n=0}^{\infty} \alpha_n \, x^n = \sum_{n=0}^{\infty} \beta_n \, x^n
\end{equation}
 has a solution in the interval $[\frac12,1)$.  There is no solution in the interval $(0,\frac12)$.  
\end{theorem}

\begin{definition}  \label{def:smallest}  For an allowable pair $(\alpha,\beta)$, denote the smallest real solution of 
equation~(\ref{eq:series}) in the interval $(0,1)$ by $r := r(\alpha,\beta)$.  
\end{definition}

For finite binary strings  $a = \epsilon_0 \epsilon_1\cdots \epsilon_{m-1}$ and $b = \delta_0 \delta_1\cdots \delta_{n-1}$, let
 $$A(x) = \sum_{j=0}^{m-1} \epsilon_ j x^j  \qquad\qquad \text{and}  \qquad\qquad
B(x) = \sum_{j=0}^{n-1} \delta_j x^j,$$ 
and define the polynomial
$$p_{(a,b)}(x) :=  (1-x^m)\, B(x)- (1-x^n)\, A(x).$$
 For $r\in (0,1)$, call finite binary strings $a$ and $b$ $\mathbf r$-{\bf equivalent} if  $r$ is a root of $p_{(a,b)}(x)$.
  It is easily checked that $r$-equivalence is indeed an 
equivalence relation on the set of finite binary strings. 

\begin{example}  \label{ex:equivalent} 
The strings $01$ and $100$ are $r$-equivalent, where $r$ is the real root of $x^3+x^2  - 1$.  In particular 
$p_{(01,100)}(x) = (1-x)(1-x-x^3)$. \m
The three strings:
$$\begin{aligned}  a &= 011110 \\ b &= 100111 \\ c &= 101010 \end{aligned}$$
are pairwise $r$-equivalent, where $r$ is the real root of $x^3+x-1$.  
In particular $p_{ab}(x) = (1-x^6)(1-x^2)(1-x-x^3)$, $p_{ac}(c) = (1-x^6)(1-x-x^3)$, and $p_{bc}(x) = (1-x^6) x^2 (1-x-x^3)$.
\end{example}

An $r$-equivalence class $C$ is closed under concatenation and is therefore a submonoid of the free monoid $\{0,1\}^*$ consisting of all finite strings with alphabet $\{0,1\}$ and with concatenation as the operation.  It is easy to check that if $a, ab \in  C$ then $b \in C$, and  if $b, ab \in  C$ then $a \in C$, i.e., $C$ is {\it left and right unitary}.  In  particular, $C$ is itself a free monoid with a unique set $G_C$ of free generators.  

\begin{definition} \label{def:admissible}
Consider a pair $(\alpha, \beta)$ of binary strings
$$\alpha = a_0 a_1 a_2 \cdots  \qquad \qquad \beta = b_0 b_1 b_2 \cdots,$$
where $S:= \{a_0,a_1,\dots, b_0,b_1, \dots\}$  is a finite subset (with repetition) of the set 
$G_C$ of generators of an $r$-equivalence class $C$. 
Then $(\alpha, \beta)$ will be called $\mathbf r$-{\bf bad} unless $\alpha = aaa\cdots, \, \beta = bbb \cdots$.
A pair of  strings that is not $r$-bad  will be called $\mathbf r$-{\bf good}.   An $r(\alpha,\beta)$-good pair $(\alpha,\beta)$ of allowable strings will be called {\bf admissible}.
\end{definition}

\begin{remark}
It follows immediatly from the fact that $p{(a,b)}(x)$ is a polynomial that, if $(\alpha, \beta)$ is allowable and $r(\alpha,\beta)$ is not an algebraic number, then $r(\alpha,\beta)$ is admissible.
\end{remark}

\begin{remark} It is proved in \cite{BV} that, if $(\alpha, \beta)$ is allowable and $r$-bad for some $r\in (0,1)$, then 
$\sum_{n=0}^{\infty} \alpha_n \, r^n = \sum_{n=0}^{\infty} \beta_n  \, r^n$. 
This does not mean, however, that $r=r(\alpha,\beta)$, because $r$ may not be the {\it smallest} solution of
the equation $\sum_{n=0}^{\infty} \alpha_n \, x^n = \sum_{n=0}^{\infty} \beta_n  \, x^n$ in the interval
$(0,1)$.  On the other hand, we have no example where it is not the smallest. 
\end{remark}

\begin{example}  [{\it Allowable but Not Admissible Pairs}] \label{ex:bad}
The allowable pair
$$\alpha = 011\overline{100} \qquad \qquad \qquad \beta = 100\overline{011}$$
is bad.  Solving equation~\eqref{eq:series}, we find that the number $r:= r(\alpha,\beta)$ is the positive real root of the polynomial $x^2+x-1$, which is 
approximately $0.6180$. It is easy to check that $100$ and $011$ are $r$-equivalent. Therefore
$(\alpha,\beta)$ is bad, hence not admissible.
\m

The allowable pair
$$\alpha = 01\overline{100} \qquad \qquad \qquad \beta = 100\overline{01}$$
is also bad.  Solving equatiion~\eqref{eq:series}, we find that the number $r:= r(\alpha,\beta)$ is 
the real root of the polynomial $x^3+x^2-1$, which is 
approximately $0.52818$.  Again it is easy to check that $100$ and $01$ are $r$-equivalent.
Therefore $(\alpha,\beta)$ is not admissible.
\end{example}

\begin{example}  [{\it Another Allowable but Not Admissible Pair}]  \label{ex:example} Another bad allowable pair is
$$\alpha = accc\cdots  \qquad \qquad \qquad \beta = bababa \cdots,$$
where $a,b$ and $c$ are the finite strings given in Example~\ref{ex:equivalent}. In this example $a,b$ and
$c$ are pairwise $r$-equivalent, where $r$ is the real root of $x^3+x-1$.   It is straightforward
to check that the (not roots of unity) solutions to equation~\eqref{eq:series} satisfy $(x^3+x-1)(x^{12}+x^6+x^2-1)=0$.  Since the only positive real root of $x^{12}+x^6+x^2-1$  is approximately $0.8062$,
while the real root of $x^3+x-1$ is smaller,  approximately $0.6823$, we have $r(\alpha,\beta) = r$.  
\end{example} 

\section{Radix Systems on $[0,1]$ from Admissible Pairs} \label{sec:Radix}

In the previous section,  address spaces 
$\Omega_{(\alpha, \beta,-)}$  and $\Omega_{(\alpha, \beta,+)}$ were defined  for every pair  $(\alpha,\beta)$ of
binary strings.  This section explains how these two address spaces, in the case where  $(\alpha,\beta)$ is
admissible, become the address spaces of two corresponding binary radix systems
for $[0,1]$.  Moverover, the converse also holds.  Every binary radix system for $[0,1]$ can be constructed in this way.  
The proofs of Theorem~\ref{thm:radix} and Theorem~\ref{thm:converse} below are omitted because they are essentially the same as the somewhat more difficult proofs of the analogous Theorems~\ref{thm:radixR} and \ref{thm:converseR} for the reals in Section~\ref{sec:reals}.

\begin{theorem} \label{thm:radix}
If  $(\alpha,\beta)$ is an admissible pair and $B_{(\alpha,\beta)} = 1/r(\alpha,\beta)$, then \linebreak $(\Omega_{(\alpha, \beta, -)} , B_{(\alpha,\beta)})$  and $(\Omega_{(\alpha, \beta, +)},  B_{(\alpha,\beta,-)})$ are binary radix systems for $[0,1]$.
\end{theorem}

\begin{definition} To simplify notation, let
$$\begin{aligned} R_{(\alpha, \beta, +)} := (\Omega_{(\alpha, \beta, +)},  B_{(\alpha,\beta)}) \\
 R_{(\alpha, \beta, -)} := (\Omega_{(\alpha, \beta, -)},  B_{(\alpha,\beta)}) 
\end{aligned}$$ 
denote the radix systems constructed from an admissible  pair  $(\alpha,\beta)$ as in Theorem~\ref{thm:radix}.
 Call  $R_{(\alpha, \beta, +)}$ and $ R_{(\alpha, \beta, -)}$  the $(\alpha,\beta)$-{\bf radix systems}.  
\end{definition}

As stated in the next theorem, all binary radix systems are $(\alpha,\beta)$-radix systems for some admissible $(\alpha,\beta)$.

\begin{theorem} \label{thm:converse}
 For every binary radix system $(\Gamma, B)$ for $[0,1]$, there is an admissible  pair $(\alpha,\beta)$ such that  either $ (\Gamma, B) = R_{(\alpha, \beta, -)}$ or  $(\Gamma, B) = R_{(\alpha, \beta, +)}$.
\end{theorem} 

\begin{example}  [{\it Standard Binary Radix System}]\label{ex:111} 
Continuing from  Example~\ref{ex:1} and Example~\ref{ex:11}, consider the admissible pair $(0\overline{1},1\overline{0})$. The number $r:=r(\alpha,\beta)$  in Theorem~\ref{thm:radix} is the smallest solution to the equation $\sum_{n=1}^{\infty} x^n = 1$, which, because the left hand side is a geometric series, reduces to $2x=1$.  So $r =1/2$ and the base is $B_{(\alpha,\beta)} = 1/r = 2$.  
The radix systems  $R_{(0\overline{1},1\overline{0}, +)}$ and $ R_{(0\overline{1},1\overline{0}, -)}$ are the standard binary radix systems.  
\end{example}

\begin{example}  [{\it Golden Ratio Radix System}] \label{ex:222} 
 Continuing from  Example~\ref{ex:2} and Example~\ref{ex:22}, consider the admissible pair $(\overline{01},1\overline{0})$.  The number $r:=r(\alpha,\beta)$ in Theorem~\ref{thm:radix} is the smallest solution to the equation $\sum_{n=0}^{\infty} x^{2n+1} = 1$, which reduces to $x^2+x-1 = 0$.  So $r = (\sqrt{5}-1)/2$, and the base $B_{(\alpha,\beta)}= 1/r =(1+\sqrt{5})/2 $ is the golden ratio. The radix systems  $R_{(\overline{01},1\overline{0}, +)}$ and $ R_{(\overline{01},1\overline{0}, -)}$ are the golden ratio radix systems.  
\end{example}

\begin{example} \label{ex:3}  Consider the $(\alpha,\beta)$-radix systems $R_{(\alpha_1,\beta_1,\pm)},\, R_{(\alpha_2,\beta_2,\pm)}$, and \linebreak $R_{(\alpha_3,\beta_3,\pm)}$, where  the admissible  pairs are: 
 $$\begin{aligned}\alpha_1 &= \overline{01000}\\ \beta_1 &= 1\overline{0} \end{aligned} \hskip 5mm 
\begin{aligned}\alpha_2 &= \overline{011}\\ \beta_2&= \overline{10} \end{aligned} \hskip 5mm 
\begin{aligned}\alpha_3 &= \overline{01}\\ \beta_3 &= \overline{100}.\end{aligned}$$
In all three cases the number $r:=r(\alpha,\beta)$ in Theorem~\ref{thm:BV} is the solution to the equation $x^3+x^2-1=0$ in the interval $(0,1)$; approximately  $r  \approx 0.7549$.   Therefore the base $B_{(\alpha,\beta)}\approx 1.3247$ and the radix maps are the same in all three cases.  The three address spaces, however, are pairwise distinct.  The address space $\Omega_{(\alpha_1,\beta_1)}$ consists of all strings that do not contain the substrings $11, 101, 1001$ or $10001$.   The address space $\Omega_{(\alpha_2,\beta_2)}$ consists of all strings that do not contain the substrings $100$ or $111$.  The address space $\Omega_{(\alpha_3,\beta_3)}$ consists of all strings that do not contain the substrings  $11$ or $1000$.  
\end{example}

\section{Binary Radix Systems for the Non-Negative Reals} \label{sec:reals}

In this section, the method of the previous section for constructing radix systems for the interval $[0,1]$ is extended
in order to construct radix systems for the set $\R^+$ of non-negative real numbers. 

\begin{definition} \label{def:addressE}
Extend the definition of address spaces $\Omega_{(\alpha, \beta,-)},\Omega_{(\alpha, \beta,+)},\Omega_{(\alpha, \beta)}$ to address spaces $\Omega^{\bullet}_{(\alpha, \beta,-)},  \Omega^{\bullet}_{(\alpha, \beta,+)}, \Omega^{\bullet}_{(\alpha, \beta)}$  of  decimals as follows:
$$\begin{aligned}& \Omega^0_{(\alpha, \beta,-)} := \{\omega\in \Omega_{(\alpha, \beta,-)} \, : \,0\omega \preceq \alpha\}, \\ 
&\Omega^0_{(\alpha, \beta,+)} := \{\omega\in \Omega_{(\alpha, \beta,+)} \, : \,   0\omega \prec \alpha\}, \\
&\Omega^0_{(\alpha, \beta)} :=\Omega^0_{(\alpha,\beta,-)}\cup \Omega^0_{(\alpha,\beta,+)}.\end{aligned}
\qquad \qquad \qquad
\begin{aligned}
  &\Omega^{\bullet}_{(\alpha, \beta,-)}  := [\, \Omega^0{(\alpha, \beta,-)} \,]^{\bullet},
 \\ &\Omega^{\bullet}_{(\alpha, \beta,+)} := [\, \Omega^0{(\alpha, \beta,+)} \,]^{\bullet}, \\
& \Omega^{\bullet}_{(\alpha, \beta)} := [\, \Omega^0{(\alpha, \beta)} \,]^{\bullet}.
\end{aligned}$$
\end{definition}
\noindent The reason for introducing the spaces $\Omega^0$ with the added  condition $0\omega \prec \alpha$ is to
insure that the spaces $\Omega^{\bullet}$ are shift invariant; see Proposition~\ref{prop:Bshift} below.  

\begin{theorem} \label{thm:radixR}
 If  $(\alpha,\beta)$ is an admissible pair, $b$ the smallest solution to
equation~(\ref{eq:series}) in the interval $(0,1)$, and $ B_{(\alpha,\beta)} = 1/b$, then $(\Omega^{\bullet}_{(\alpha, \beta, -)} , B_{(\alpha,\beta)})$  and $(\Omega^{\bullet}_{(\alpha, \beta, +)},  B_{(\alpha,\beta)})$ are binary radix systems for $\R^+$.
\end{theorem}

\begin{definition} To simplify notation, let
$$\begin{aligned} R^{\bullet}_{(\alpha, \beta, +)} := (\Omega^{\bullet}_{(\alpha, \beta, +)},  B_{(\alpha,\beta)}) \\
 R^{\bullet}_{(\alpha, \beta, -)} := (\Omega^{\bullet}_{(\alpha, \beta, -)},  B_{(\alpha,\beta)}) 
\end{aligned}$$ 
denote the radix systems constructed from an admissible  pair  $(\alpha,\beta)$ as in Theorem~\ref{thm:radixR}.
 Call  $R^{\bullet}_{(\alpha, \beta, +)}$ and $ R^{\bullet}_{(\alpha, \beta, -)}$  the $(\alpha,\beta)$-{\bf radix systems}.  
\end{definition}

 As stated in the next theorem, all binary radix systems are $(\alpha,\beta)$- radix systems for some admissible $(\alpha,\beta)$. 

\begin{theorem} \label{thm:converseR}
 For every binary radix system $(\Gamma, B)$ for $\R^+$, there is an admissible  pair $(\alpha,\beta)$ such that  either $ (\Gamma, B) = R^{\bullet}_{(\alpha, \beta, -)}$ or  $(\Gamma, B) = R^{\bullet}_{(\alpha, \beta, +)}$.
\end{theorem} 

\begin{example} [{\it Standard Binary Radix System}]  This is a continuation of  Examples~\ref{ex:1},\;\ref{ex:11}, and \ref{ex:111} on the standard binary radix system.  For the admissible pair $(0\overline 1, 1\overline 0)$,
in Definition~\ref{def:addressE}, the space $\Omega^0_{(\alpha, \beta, \pm)} = \Omega_{(\alpha, \beta, \pm)}$.
Therefore $\Omega^{\bullet}_{(\alpha, \beta, +)}$ consists of all decimals that do not end in $0\overline 1$ and 
 $\Omega^{\bullet}_{(\alpha, \beta, -)}$ consists of all decimals that do not end in $1\overline 0$.   
\end{example}

\begin{example} [{\it Golden Ratio Radix System}] \label{ex:2222}  This is a continuation of  Examples~\ref{ex:2},\;\ref{ex:22}, and \ref{ex:222} on the golden ratio radix system.  The relavant admissible pair is $(\overline{01},1\overline 0)$.  Note that  $\Omega^0_{(\alpha, \beta)} \neq \Omega_{(\alpha, \beta)}$ since $11\overline 0 \in \Omega_{(\alpha, \beta)} \setminus \, \Omega^0_{(\alpha, \beta)}$.  This is because $011\overline 0 \succ \alpha$.  In partular,  $11\bb \notin  \Omega^{\bullet}_{(\alpha,\beta)}$.  Indeed, it is an easy exercise to
show that $\omega \in \Omega^{\bullet}_{(\alpha, \beta)}$ if and only if  $\omega$ does not
contain $11$ as a substring.
\end{example}

The remainder of this section contains the proof of Theorems~\ref{thm:radixR}, beginning with the proof of shift invariance. The proof of Theorem~\ref{thm:converseR} appears in the next section.

\begin{prop} \label{prop:Bshift} 
Given an admissible pair $(\alpha,\beta)$, the address spaces $\Omega^{\bullet}_{(\alpha, \beta,-)}$,\linebreak  $\Omega^{\bullet}_{(\alpha, \beta,+)}$, and $\Omega^{\bullet}_{(\alpha, \beta)}$ are shift invariant. 
\end{prop} 

\begin{proof} The proposition follows in a straightforward way from the definition of the address spaces, except for one detail.  Consider the case of $\Omega^{\bullet}_{(\alpha, \beta,-)}$ (the case $\Omega^{\bullet}_{(\alpha, \beta,+)}$ is similar).   Again, for $\omega \in  \Omega^{\bullet}$, let ${\widehat \omega} \in \Omega$ denote the string
 $\omega$ with the decimal point removed.  
 If $\omega := \omega_0 \cdots \omega_N {\bb} \omega_{N+1}\omega_{N+2} \cdots \in \Omega^{\bullet}_{(\alpha, \beta,-)}$, then for  $\Omega^{\bullet}_{(\alpha, \beta,-)}$ to be shift invariant it is necessary that $\bb 0\, \omega_0 \omega_1 \omega_2 \cdots \in \Omega^{\bullet}_{(\alpha, \beta,-)}$.  Therefore it is necessary, not only that $S^n{\widehat \omega} \notin (\alpha,\beta]$ for all $n\geq 0$, but also that
$0{\widehat \omega}  \notin (\alpha,\beta]$, i.e., $0{\widehat \omega} \preceq \alpha$.  But this is exactly 
the condition in the defintion of $\Omega^0_{(\alpha, \beta,-)}$.
\end{proof}

\begin{proof} (Theorem~\ref{thm:radixR})
The shift invariance of  $\Omega^{\bullet}_{(\alpha, \beta, \pm)}$, which is condition 1 in  Definition~\ref{def:radix}, is Proposition~\ref{prop:Bshift}.

We next show that  the radix map $\pi  \, : \,   \Omega_{(\alpha, \beta, -)} \rightarrow [0,1]$ in Equation~\eqref{eq:radixM1} is strictly increasing and bijective.   It is  proved in \cite[Lemma 3.10]{BV}  that
$\pi$ is increasing, and in \cite[Proposition 3.2]{BV} that $\pi$ is continuous with respect to the following metric
on $\Omega$:
$$d(\omega,\sigma)=\begin{cases} 2^{-k} \;\; &\text{if} \; \omega\neq \sigma \\
0  \;\; &\text{if} \;  \omega = \sigma, \end{cases}$$ 
where $k$ is the least index such that $\omega_{k}\neq \sigma_{k}$.  To show that $\pi$ is strictly
increasing, let $\sigma,\omega\in \Omega_{(\alpha,\beta,-)}$ with $\sigma \prec \omega$.  Without loss
of generality (by taking a shift) it may be assumed that $\sigma_0 = 0$ and $\omega_0 = 1$. Hence
 $\sigma \preceq \alpha \prec \beta \prec \omega$.  With notation as in Definition~\ref{def:exp}, let $n$ be such that $\beta|(n-1) = \omega|(n-1)$, but $\beta_n = 0, \, \omega_n = 1$.  Since $\alpha \prec   S^n \omega$, we have
$\pi(\alpha) \leq \pi(S^n \omega)$.  Since $S^n\beta \prec \alpha$, it is shown in \cite[Lemma 4.6]{BV} that
$\pi(S^n\beta) < \pi(\alpha)$.  Therefore $\pi(S^n\beta) < \pi(\alpha) \leq \pi(S^n \omega)$ and
therefore $\pi(\sigma) \leq \pi(\beta) < \pi(\omega)$.  That $\pi$ is surjective follows from the 
continuity of $\pi$ and the fact that $\pi(\overline 0) = 0, \, \pi(\overline 1) = 1$.  

It now suffices to prove that the radix map $\overset {\bullet}{\pi}\, : \,  \Omega^{\bullet}_{(\alpha, \beta, \pm)} \rightarrow \R^+$  in Equation~\eqref{eq:radixM2}   is strictly  increasing and bijective. We will prove it for $\Omega^{\bullet}_{(\alpha, \beta, +)}$; the proof for   $\Omega^{\bullet}_{(\alpha, \beta, -)}$ is the same. Abbreviate $B:= B_{(\alpha, \beta)}$ and let $b=1/B$.  Let $p = (1-b) \sum_{n=0}^{\infty} \alpha_n \, b^n = (1-b) \sum_{n=0}^{\infty} \beta_n \, b^n$.  Assume that $\omega \in \Omega_{(\alpha, \beta,+)}$.   Since  $\pi$ is strictly increasing, we have $0 \,\omega \prec \alpha$ 
if and only if  $b \pi(\omega) = \pi (0 \omega) < \pi(\alpha) = p$ if and only if $\pi(\omega) < Bp$.  
Therefore  $\pi \, : \,  \Omega^0_{(\alpha, \beta, +)}\rightarrow [0,Bp]$ is strictly increasing and bijective.
Now let $\Omega^{\bullet}_N$ denote the set of  elements of  $\Omega^{\bullet}_{(\alpha, \beta, +)}$ of the form   $\omega := \omega_0  \omega_1 \omega_2 \cdots \omega_{N-1} {\bb} \omega_N \omega_{N+1} \cdots$, where  ${\widehat \omega}:= \omega_0  \omega_1 \omega_2 \cdots \in  \Omega^0_{(\alpha, \beta,+)}$.  The relationship between the radix map $\pi$ and the radix map $\overset{\bullet}{\pi}$ is  given by $\overset{\bullet}{\pi}(\omega) = \frac{B^{N}}{1-b} \, \pi({\widehat \omega})$, and therefore
$\overset{\bullet}{\pi}\, : \, {\Omega^{\bullet}_N} \rightarrow [\, 0, \frac{B^{N+1}p}{b-1}\,] $ is  a  bijection and strictly increasing. Since the sequence $\Omega^{\bullet}_0 \subset \Omega^{\bullet}_1\subset \Omega^{\bullet}_2\subset \cdots$ of sets is nested, and since
$\bigcup_{N=0}^{\infty} \Omega^{\bullet}_N =  \Omega^{\bullet}_{(\alpha, \beta, +)}$, the proof  is complete.
\end{proof}

\section{Algorithm for Determining the Address} \label{sec:alg} 

Given a binary radix system for $\R^+$, the radix map $\overset{\bullet}{\pi}$ assigns a non-negative real number to each decimal in the address space.   In this section an algorithm is provided for converting in the opposite direction.  Given a non-negative real number $x$, the algorithm determines its decimal representation in the binary radix system. More precisely, if  $R^{\bullet}_{(\alpha,\beta,\pm) }= (\Omega^{\bullet}_{(\alpha, \beta, \pm)}, B_{(\alpha,\beta)})$ has radix map $\overset{\bullet}{\pi} := \overset{\bullet}{\pi}_{(\alpha, \beta)}$ and $x\in \R^+$, then the algorithm finds $\sigma \in \Omega^{\bullet}_{(\alpha, \beta, -)}$ and $\omega \in\Omega^{\bullet}_{(\alpha, \beta, +)}$ such that $\overset{\bullet}{\pi}(\sigma)  = \overset{\bullet}{\pi}(\omega) = x$.  \m

We begin by defining a certain family of functions and introduce notation for the
itineraries of points of this family of functions.  
Given $B$ such that $1<B\leq 2$ and $p$ such that $1-b \leq p \leq b$ where $b=1/B$, 
consider the two functions  $f_{(B,p, \pm)} \, : [0,1] \rightarrow [0,1]$ defined by
\begin{equation} \label{eq:f-}
f_{(B,p, -)}(x) := \begin{cases} Bx \quad  &\text{if $0\leq x \leq p$}  \\  
Bx + (1-B) \quad &\text{if $x > p$}. \end{cases}
\end{equation}
and
\begin{equation} \label{eq:f+}
f_{(B,p, +)}(x) := \begin{cases} Bx \quad  &\text{if $0 \leq x < p$}  \\  
Bx + (1-B) \quad &\text{if $x \geq p$}, \end{cases}
\end{equation}
The facts that $1 < B \leq 2$ and $1-b \leq p \leq b$ guarantee that $f_{(B,p, \pm)}$ has the form shown
in Figure 1.  For a function $f$, the $n^{th}$ iterate, i.e.  $f$ composed with itself $n$ times, is denoted $f^n$.
\begin{figure}[htb] \label{fig:1}
\begin{center} 
\includegraphics[width=2.5in, keepaspectratio]{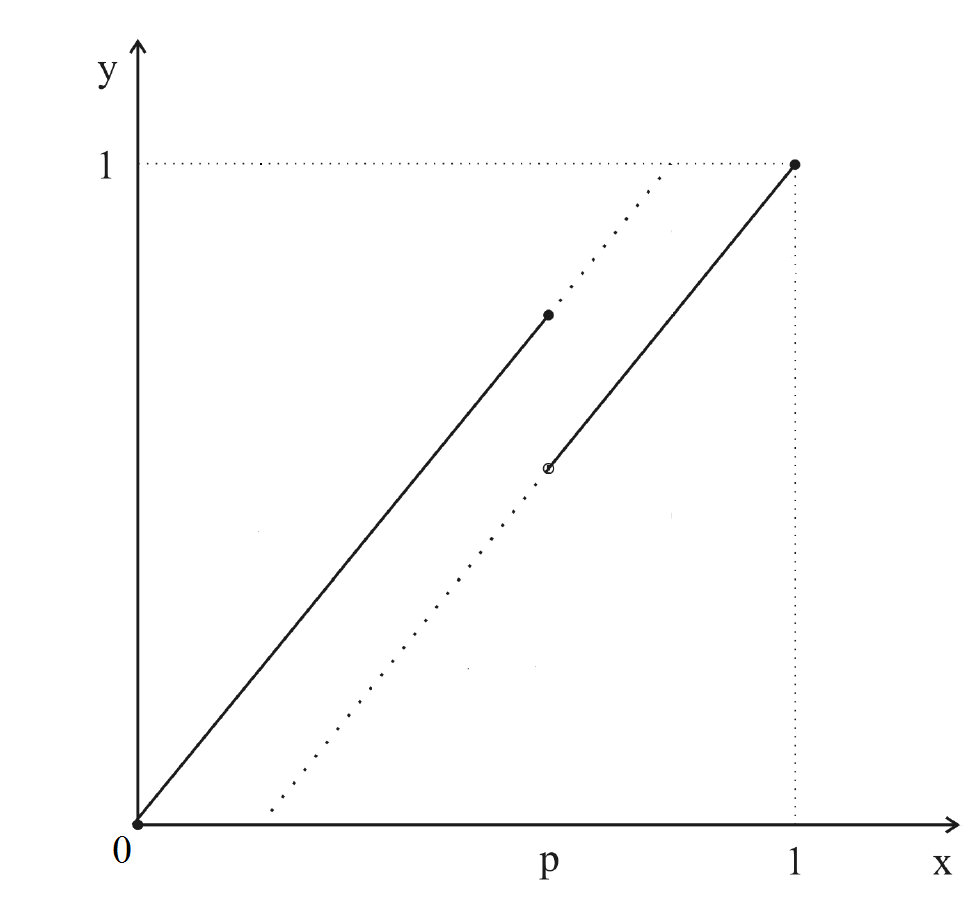}
\caption{The function $f_{(B,p, \pm)}$.}
\end{center}
\end{figure}

\begin{definition} \label{def:itinerary}
Define the two  {\bf itinerary maps} $\tau_{(B,p,\pm)} \, : [0,1] \rightarrow \Omega$  by 
$\tau_{(B,p,-)} = \omega_0\omega_1\omega_2 \cdots$ and    $\tau_{(B,p,+)} = 
\sigma_0\sigma_1\sigma_2 \cdots$, 
where
\begin{equation*}
\omega_k = \begin{cases} 0 \quad \text{if} \;\;\;f_{(B,p, -)}^k(y) \leq p \\ 1  \quad \text{if} \; \;\;f_{(B,p, -)}^k(y) > p,
\end{cases}  \qquad \quad \text{and} \quad \qquad 
\sigma_k = \begin{cases} 0 \quad \text{if} \; \;\; f_{(B,p, +)}^k(y) < p \\ 1  \quad \text{if} \;\;\; f_{(B,p, +)}^k(y) \geq p.
\end{cases} \end{equation*}
In dynamical systems terminology, $\tau_{(B,p,-)}(y)$ and $\tau_{(B,p,+)}(y)$ are called the \linebreak
{\bf itineraries} of the point $y$.   
\end{definition}

\noindent {\bf Algorithm}
\m

\noindent Input: An admissible pair of strings $(\alpha, \beta)$ and an $x\in \R^+$.
\m

\hangindent=5mm  \hangafter 1
\noindent Output:  The decimals representations ${\sigma}$ and $\omega$ of $x$ in the binary radix systems  $R^{\bullet}_{(\alpha, \beta,-)}$ and $R^{\bullet}_{(\alpha, \beta,+)}$, respectively. \B

1.  Find the least $x\in (0,1)$ such that  $\sum_{n=0}^{\infty} \alpha_n \, x^n = 
\sum_{n=0}^{\infty} \beta_n \, x^n$; call it $b$ and let $B=1/b$.\m

2. Compute  $p:= \pi(\alpha) = \pi(\beta)$, where $\pi\, : \, \Omega \rightarrow [0,1]$ is the radix map.  \m

3.  Find the minimum non-negative integer $N$ such that $y:= b^{N}(1-b) x < p$. \vskip 2mm

4. Compute $\tau_{(B,p,-)}(y) =  \sigma_0\, \sigma_1 \, \sigma_2 \cdots$ and 
$\tau_{(B,p,+)}(y) =  \omega_0\, \omega_1 \, \omega_2 \cdots$. 
\m

4. Return $$\begin{aligned} { \sigma} &=  \sigma_0\, \sigma_1 \, \sigma_2 \cdots \sigma_{N} {\bb} \,\sigma_{N +1}\, \sigma_{N+2} \cdots,\quad
\text{and} \\
 {\omega} &=  \omega_0\, \omega_1 \, \omega_2 \cdots \omega_{N} {\bb} \, \omega_{N +1}\, \omega_{N+2} \cdots .
\end{aligned}$$ \m

\begin{definition} \label{def:section}  Denote the output $\sigma$ and $\omega$ of the algorithm by   $\tau_{(\alpha,\beta,-)}(x)$ and  $\tau_{(\alpha,\beta,+)}(x)$, respectively, and call  $\tau_{(\alpha,\beta,\pm)} \, : \, \R^+ \rightarrow \Omega^{\bullet}_{(\alpha, \beta,\pm)}$ the {\bf section maps}. In other words, these give the
decimal representions of $x$ in $R_{(\alpha,\beta,\pm)}$.  Note that $\tau_{(\alpha,\beta,\pm)}$ 
takes values in  $\Omega^{\bullet}_{(\alpha, \beta,\pm)}$ while $\tau_{(B,p,\pm)}$ takes values in  $\Omega$.
\end{definition}

\begin{example}[Golden Ratio Radix System]    This is a continuation of  Examples~\ref{ex:2},\;\ref{ex:22},\; \ref{ex:222},  and \ref{ex:2222} on the golden ratio radix system.  The relavant admissible pair is $(\overline{01},1\overline 0)$.  In this case $B = \frac {1+\sqrt{5}}{2}$ and  $p = \frac{3-\sqrt{5}}{2}$.  
If, for example,  $x = 2$, then $N=2$ and
$$\tau_{(\alpha,\beta,-)}(2) = \tau_{(\alpha,\beta,+)}(2) = 10_{\bb}0\overline{01}.$$
\end{example}

The following theorem suffices to prove the validity of the algorithm. 

\begin{theorem}  \label{thm:inverse}
The section map $\tau_{(\alpha,\beta,\pm)} \, : \, \R^+ \rightarrow \Omega^{\bullet}_{(\alpha, \beta,\pm)}$ is the inverse of the radix map $\overset{\bullet}{\pi} \, : \,\Omega^{\bullet}_{(\alpha, \beta,\pm)} \rightarrow \R^+$.  
\end{theorem}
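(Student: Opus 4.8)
The plan is to show that the two maps $\tau_{(\alpha,\beta,\pm)}$ and $\pi_{(\alpha,\beta)}$ are mutually inverse by establishing that composing them (in either order) is the identity. Since Theorem~\ref{thm:BV} already tells us that $\pi_{(\alpha,\beta)} \colon \Omega^{\bullet}_{(\alpha, \beta,\pm)} \to \R^+$ is a bijection, it actually suffices to prove a single one of the two composition identities — say $\pi_{(\alpha,\beta)} \circ \tau_{(\alpha,\beta,\pm)} = \mathrm{id}_{\R^+}$ — because a one-sided inverse of a bijection is automatically two-sided. So I would focus on: given $x \in \R^+$, run the algorithm to get $\sigma = \tau_{(\alpha,\beta,-)}(x)$ (and similarly $\omega$ for the $+$ case), and verify that $\pi_{(\alpha,\beta)}(\sigma) = x$.

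First I would pin down the relationship between the itinerary map $\widehat\tau_{(B,p,\pm)}$ and the projection map $\pi_b$. The key identity is a conjugacy: the map $f_{(B,p,-)}$ on $[0,1]$ is conjugate (via $\pi_b$) to the shift $S$ on the address space, in the sense that $\pi_b(S\omega) = f_{(B,p,-)}(\pi_b(\omega))$ for $\omega$ in the appropriate space, together with the fact that $\omega_k = [\widehat\tau_{(B,p,-)}(\pi_b(\omega))]_k$. Concretely, from $\pi_b(\omega) = (1-b)\sum_{k\ge 0}\omega_k b^k$ one computes $B\,\pi_b(\omega) = (1-b)\omega_0 B \cdot \tfrac{1}{1-b}\cdots$ — more cleanly, $\pi_b(\omega) = (1-b)\omega_0 + b\,\pi_b(S\omega)$, so $f_{(B,p,\pm)}(\pi_b(\omega)) = \pi_b(S\omega)$ provided the branch taken by $f$ at $\pi_b(\omega)$ matches the value of $\omega_0$. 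That branch-matching is exactly where the admissibility condition $S^n\omega \notin (\alpha,\beta]$ (resp. $[\alpha,\beta)$) and the threshold $p = \pi_b(\alpha) = \pi_b(\beta)$ enter: one shows $\omega \in \Omega_{(\alpha,\beta,-)}$ forces $\pi_b(\omega) \le p \iff \omega_0 = 0$. Hence for $y \in [0,1)$ with $y < p$ handled by step 3, $\widehat\tau_{(B,p,\pm)}(y)$ is precisely the unique string in $\Omega^0_{(\alpha,\beta,\pm)}$ whose image under $\pi_b$ is $y$ — this is essentially a restatement of the bijectivity of $\pi_b$ on $\Omega_{(\alpha,\beta,\pm)}$ from Theorem~\ref{thm:BV}, refined to identify the inverse explicitly as the itinerary.

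Next I would unwind the scaling. Step 3 chooses $N$ minimal with $y := b^N(1-b)x < p$; step 4 sets $\widehat\sigma = \widehat\tau_{(\alpha,\beta,-)}(y)$ and places the decimal point after position $N$. By the previous paragraph $\pi_b(\widehat\sigma) = y = b^N(1-b)x$, and by the relation $\pi_{(\alpha,\beta)}(\sigma) = \tfrac{B^N}{1-b}\pi_b(\widehat\sigma)$ recorded in the proof of Theorem~\ref{thm:BV}, we get $\pi_{(\alpha,\beta)}(\sigma) = \tfrac{B^N}{1-b}\cdot b^N(1-b)x = x$ since $B^N b^N = 1$. I also need to check that $\sigma$ genuinely lands in $\Omega^{\bullet}_{(\alpha,\beta,-)}$, i.e. that $\widehat\sigma \in \Omega^0_{(\alpha,\beta,-)}$: the condition $S^n\widehat\sigma \notin (\alpha,\beta]$ holds because itineraries of $f_{(B,p,-)}$ never produce forbidden strings (again the conjugacy plus the fact that $f_{(B,p,-)}$ maps $[0,1]$ into $[0,1]$ and the branch structure respects the $(\alpha,\beta]$ exclusion), and the extra condition $0\widehat\sigma \preceq \alpha$ holds precisely because $y < p$, which translates to $\pi_b(0\widehat\sigma) = b\,y < bp$... — here I must be careful: the needed inequality is $\pi_b(0\widehat\sigma) \le p$, and $\pi_b(0\widehat\sigma) = b\pi_b(\widehat\sigma) = by < bp \le p$, so it holds. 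The $+$ case is identical with the strict/non-strict inequalities swapped.

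The main obstacle I anticipate is the branch-matching lemma: proving cleanly that for $\omega$ in $\Omega_{(\alpha,\beta,-)}$ one has $\pi_b(\omega) \le p$ exactly when $\omega_0 = 0$ (and $> p$ exactly when $\omega_0 = 1$), which is what makes $f_{(B,p,-)}$ act as the shift and makes the itinerary recover the digits. This requires knowing that $\pi_b$ restricted to $\{\omega : \omega_0 = 0, \omega \in \Omega_{(\alpha,\beta,-)}\}$ has image exactly $[0,p]$ and on $\{\omega_0 = 1\}$ has image $(p,1]$, which in turn leans on the strict monotonicity and surjectivity of $\pi_b$ from \cite{BV} together with the defining exclusion $S^n\omega \notin (\alpha,\beta]$ applied at $n=0$. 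Once that lemma is in hand, everything else is the bookkeeping of the $b^N$ rescaling, and the argument closes by invoking that a one-sided inverse of the bijection $\pi_{(\alpha,\beta)}$ is the inverse.
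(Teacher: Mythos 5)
Your proposal is correct in outline and takes a genuinely different route from the paper. The paper establishes $\pi_b\circ\widehat\tau_{(B,p,-)}=\mathrm{id}$ by a direct limit/nesting construction: it introduces the inverse branches $g_0(x)=bx$, $g_1(x)=bx+(1-b)$, observes that the itinerary of $x$ forces $x\in g_{\omega_0}\circ\cdots\circ g_{\omega_k}(M_{\omega_{k+1}})$ for all $k$, and shows the limit of these compositions is $\pi_b(\omega)$. It then verifies \emph{separately} that the itinerary lands in the address space by citing \cite[Theorem~5.1]{BHV} (which characterizes $\widehat\tau_{(B,p,-)}([0,1])$ in terms of the itineraries $\tau_\pm$ of the critical point $p$) together with \cite[Theorem~1]{BV} (that $\tau_-=\alpha$ and $\tau_+=\beta$). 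You instead make the ``branch-matching'' observation central: for $\omega\in\Omega_{(\alpha,\beta,-)}$, $\pi_b(\omega)\leq p\iff\omega_0=0$, which follows from the $n=0$ exclusion $\omega\notin(\alpha,\beta]$ and the strict monotonicity of $\pi_b$ from Theorem~\ref{thm:BV}, combined with the conjugacy $\pi_b\circ S=f_{(B,p,\pm)}\circ\pi_b$ on $\Omega_{(\alpha,\beta,\pm)}$ (which you note reduces to $\pi_b(\omega)=(1-b)\omega_0+b\,\pi_b(S\omega)$). Iterating this shows the itinerary of $y$ coincides digit-by-digit with the unique $\omega\in\Omega_{(\alpha,\beta,-)}$ satisfying $\pi_b(\omega)=y$, which establishes the identity \emph{and} the image condition simultaneously, without invoking \cite[Theorem~5.1]{BHV} or \cite[Theorem~1]{BV} at this stage (though both routes ultimately rest on the bijectivity of $\pi_b$ imported from \cite{BV} via Theorem~\ref{thm:BV}). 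Your approach is more self-contained; the price is that you must write out the inductive step (branch-matching at every $S^k\omega$) that you currently only gesture at when you say the ``branch structure respects the $(\alpha,\beta]$ exclusion'' — that step is the whole proof, so it deserves to be made explicit rather than flagged as an anticipated obstacle. The scaling bookkeeping with $b^N(1-b)$ and the check that $y<p$ forces $0\widehat\sigma\preceq\alpha$ match the paper exactly.
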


\begin{proof}   We will prove that  $\tau_{(\alpha,\beta,-)}$ is the inverse of $\pi$;
a similar proof holds for  $\tau_{(\alpha,\beta,+)}$.
We first show that the functions $f_{(B,p, \pm)}$ are well defined, i.e. $1< B \leq 2$ and 
 $1-b \leq p \leq b$.  Clearly $1< B \leq 2$ because $1 > b \geq 1/2$ as stated in Theorem~\ref{thm:BV}.  
Moreover  $1-b \leq p \leq b$ because
$$1-b \leq (1-b) \sum_{n=0}^{\infty} \beta_n b^n   = p =  (1-b)\sum_{n=0}^{\infty} \alpha_n b^n \leq 
  (1-b)\sum_{n=1}^{\infty}  b^n = b.$$

To simplify notation, abbreviate $\tau := \tau_{(\alpha,\beta,-)}$ and  ${\widehat \tau} := \tau_{( B,p,-)}$. We next show that $\pi\circ {\widehat \tau}$ is the identity on $[0,1]$.  If $f_0(x) = Bx, \; f_1(x)  = Bx + (1-B)$, and  $g_0(x) = bx, \; g_1(x)  = bx + (1-b)$, then  $f_0$ and $g_0$ are inverses, as are $f_1$ and $g_1$. 
Expressing $g_{i}(x)  = bx+i(1-b)$ for $i=0,1$ and iterating
$$g_{\omega_0} \circ g_{\omega_1} \circ g_{\omega_{2}} \circ \cdots \circ g_{\omega_k}(x) = 
b^k x + (b^{k-1} \omega_{k-1} + \cdots + a\omega_1 + \omega_0)(1-b).$$
Therefore, for any $x_0$, we have
$$\lim_{k\rightarrow \infty} g_{\omega_0} \circ  g_{\omega_1} \circ  g_{\omega_2} \circ \cdots\circ
 g_{\omega_k} (x_0) = (1-b) \sum_{k=0}^{\infty} \omega_k \, b^k = \pi(\omega) .$$
 Let $M_0 = [0,p]$ and $M_1= (p,1]$.  Let $x\in [0,1]$ and ${\widehat \tau}(x) = \omega$.  It follows from the definition of ${\widehat \tau}$ that 
$$x \in M_{\omega_0}, \quad f_{\omega_0}(x) \in M_{\omega_1}, \quad f_{\omega_1}\circ f_{\omega_0}(x) \in  M_{\omega_2},  \quad  f_{\omega_2}\circ f_{\omega_1}\circ  f_{\omega_0}\in M_{\omega_3},\dots$$ and therefore 
$$x \in  g_{\omega_0}( M_{\omega_1}),\quad  x\in  g_{\omega_0}\circ g_{\omega_1} (M_{\omega_2}), \quad
 g_{\omega_0}\circ g_{\omega_1}\circ g_{\omega_2} (M_{\omega_3}), \dots.$$  Hence
$(\pi\circ {\widehat \tau})(x) =  \lim_{k\rightarrow \infty} g_{\omega_0} \circ  g_{\omega_1} \circ  g_{\omega_2} \circ \cdots\circ  g_{\omega_k} (x_0) = x.$ 

To show that  $\tau$ is the inverse of $\overset{\bullet}{\pi}$, with notation as in the algorithm and  letting $\sigma := \tau(x)$, we have
$$(\overset{\bullet}{\pi} \circ \tau)(x) = \overset{\bullet}{\pi} (\sigma) = \frac{B^N}{1-b} \, \pi({\widehat \sigma}) =
 \frac{B^N}{1-b} \, (\pi\circ {\widehat \tau}) (y) =  \frac{B^N}{1-b} \,  b^N(1-b) x = x.$$

It remains to show that the image of any $x\in \R^+$ under the map $\tau$ lies in $\Omega^{\bullet}_{(\alpha, \beta,-)}$.   With $y$ as defined in the algorithm, it is 
 sufficient to show that ${\widehat \tau}(y) \in  \Omega^0_{(\alpha, \beta,-)}$. 
Let $\tau_-$ and $\tau_+$ denote the itineraries of the point $p$ 
of the functions  $f_{(B,p, -)}$ and $f_{(B,p, +)}$,  respectively.  In \cite[Theorem 5.1]{BHV} it is proved
that ${\widehat \tau}([0,1]) =  \{\omega\in \Omega \, : \, S^n\omega \notin (\tau_-,\tau_+] \; \text{for all} \; n\geq 0\}$, and in \cite[Theorem 1.1]{BV} it is proved that $\alpha = \tau_-$ and $\beta=\tau_+$.  Therefore  ${\widehat \tau}([0,1]) = \Omega_{(\alpha, \beta,-)}$.  It only remains to show that if  
${\widehat \omega} = {\widehat \tau} (y)$,
 then $0\widehat \omega \preceq \alpha$.  However, since  $y<p$  as in step 1 of the algorithm,  it follows immediatly from the definition of  the itinerary ${\widehat \tau}(y)$ of $y$ that ${\widehat \omega}_0 = 0$.  In particular $0{\widehat \omega} \prec \alpha$.
\end{proof}

\begin{cor}  For each base $B, \; 1 < B < 2$, there exist infinitely many binary radix systems with base $B$.   
\end{cor}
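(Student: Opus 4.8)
The plan is to run Theorem~\ref{thm:BV} in reverse: since every admissible non-null pair $(\alpha,\beta)$ whose equation $\sum_n\alpha_nx^n=\sum_n\beta_nx^n$ has minimum solution $b\in[\tfrac12,1)$ produces the two binary radix systems $R(\alpha,\beta,\pm)$ of base $1/b$, it suffices to exhibit, for each $B\in(1,2)$ and $b:=1/B\in(\tfrac12,1)$, infinitely many admissible non-null pairs $(\alpha,\beta)$ for which $b$ is that minimum solution and for which the address spaces $\Omega^{\bullet}_{(\alpha,\beta,-)}$ are pairwise distinct. Distinctness of the address spaces is exactly what distinguishes the radix systems, because once the base is fixed, condition~3 of Definition~\ref{def:radix} determines the radix map; and distinct values of $\alpha$ already force distinct address spaces, since $\alpha$ is determined by $\Omega^{\bullet}_{(\alpha,\beta,-)}$: among all $\omega$ with $\omega_0=0$ that arise, after deleting the point, from elements of $\Omega^{\bullet}_{(\alpha,\beta,-)}$, the string $\alpha$ is the largest in lexicographic order (any strictly larger such $\omega$ would lie in $(\alpha,\beta]$, hence be forbidden, while $\alpha$ itself is admissible).

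To build the pairs, fix $b$ and let the break-point $p$ run over the nondegenerate interval $[\,1-b,\,b\,]$ (nondegenerate because $b>\tfrac12$). For each such $p$ the maps $f_{(B,p,\pm)}$ of Section~\ref{sec:alg} are well defined, and I set $\alpha_p:=\widehat\tau_{(B,p,-)}(p)$ and $\beta_p:=\widehat\tau_{(B,p,+)}(p)$, the itineraries of the break-point itself. A computation of the first two coordinates (using $B>1$ and $p<1$) gives $\alpha_p=01\cdots$ and $\beta_p=10\cdots$, which is condition~1 of Definition~\ref{def:admissible}; condition~2 follows from the identities $\widehat\tau_{(B,p,-)}([0,1])=\{\omega:S^n\omega\notin(\alpha_p,\beta_p]\text{ for all }n\}$ and $\widehat\tau_{(B,p,+)}([0,1])=\{\omega:S^n\omega\notin[\alpha_p,\beta_p)\text{ for all }n\}$ — the content of \cite[Theorem~5.1]{BHV} together with \cite[Theorem~1]{BV} — applied to $\alpha_p$ and $\beta_p$, which lie in those image sets. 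Since $\pi_b\circ\widehat\tau_{(B,p,\pm)}$ is the identity on $[0,1]$ (this is shown inside the proof of Theorem~\ref{thm:inverse}), we get $\pi_b(\alpha_p)=\pi_b(\beta_p)=p$, hence $\sum_n(\alpha_p)_nb^n=\tfrac{p}{1-b}=\sum_n(\beta_p)_nb^n$, so $b$ solves the equation of the pair $(\alpha_p,\beta_p)$; and $\pi_b(\alpha_p)=p$ shows that distinct values of $p$ give distinct $\alpha_p$. This already produces uncountably many admissible pairs, with pairwise distinct address spaces, whose equation is solved by $b$.

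Two points then have to be nailed down, the second being the real obstacle. First, the base of $R(\alpha_p,\beta_p,\pm)$ equals $B$ only if $b$ is the \emph{minimum} solution of $\sum_n(\alpha_p)_nx^n=\sum_n(\beta_p)_nx^n$ in $[\tfrac12,1)$; I would settle this by showing that for an admissible pair this solution is in fact \emph{unique} in $[\tfrac12,1)$ — the existence half is statement~1 of Theorem~\ref{thm:BV} (from \cite{BV}), and uniqueness falls out of the same analysis, $b$ being the only $x\in[\tfrac12,1)$ at which the projection map $\pi_x$ identifies $\alpha$ with $\beta$. Second, one must guarantee $(\alpha_p,\beta_p)$ non-null for infinitely many $p$. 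When $B$ exceeds the golden ratio this is easy: computing the second iterates $f_{(B,p,\pm)}^{2}(p)$ forces $\alpha_p=011\cdots$ and $\beta_p=100\cdots$ for \emph{every} $p\in[1-b,b]$, and one further iterate gives $[\alpha_p]_3=1$ on a nonempty sub-interval of break-points, where the sufficient condition for non-nullity recorded just before the definition of ``null'' in Section~\ref{sec:admissible} then applies. For $B\le$ golden ratio one argues from the left endpoint instead: $(\alpha_{1-b},\beta_{1-b})=(0d_B,\,1\overline0)$, where $d_B$ is the quasi-greedy base-$B$ expansion of $1$, is non-null — its address space contains every string all of whose tails are $\preceq d_B$ and which has infinitely many $1$'s, that is, essentially the base-$B$ Parry subshift, whose exponential growth rate is $\log B>0$ — and, since $\alpha_p\to 0d_B$ and $\beta_p\to 1\overline0$ coordinatewise as $p\to(1-b)^+$ while the newly forbidden strings form an exponentially thin family, the growth rate $h(\Omega_{(\alpha_p,\beta_p)})$ stays positive for $p$ in a half-neighborhood of $1-b$. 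Making this last continuity-of-growth-rate statement rigorous and parameter-uniform — or, equivalently, upgrading the forbidden-substring criterion of Section~\ref{sec:admissible} to one covering all of $(1,2)$ — is, I expect, where the effort concentrates; everything else is bookkeeping on top of Theorems~\ref{thm:BV} and~\ref{thm:inverse} and the cited results of \cite{BV} and \cite{BHV}.
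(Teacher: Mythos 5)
Your strategy is exactly the paper's: fix $b=1/B$, let the break-point $p$ range over the nondegenerate interval $[1-b,b]$, take $\alpha_p$ and $\beta_p$ to be the itineraries of $p$ under $f_{(B,p,-)}$ and $f_{(B,p,+)}$, and observe that infinitely many choices of $p$ yield infinitely many admissible non-null pairs. Your distinctness argument is even a little cleaner than the paper's: you read off $p=\pi_b(\alpha_p)$ directly, whereas the paper cites \cite[Lemma 3.1]{SSV} for the strict monotonicity of $p\mapsto\alpha_p$, $p\mapsto\beta_p$.

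But you have diagnosed the real gap yourself, and it is a genuine one. Your argument that $(\alpha_p,\beta_p)$ is \emph{non-null} is incomplete: for $B$ above the golden ratio you check a few coordinates of the itinerary and invoke the paper's informal sufficient condition, and for smaller $B$ you sketch a continuity-of-entropy argument near the endpoint $p=1-b$ that you admit would need substantial work to make rigorous and uniform. The same is true of your ``minimum solution'' worry, which you leave as an assertion that uniqueness ``falls out of the same analysis.'' The paper closes all of this at one stroke by citing \cite[Theorem 5.2]{BHV}: for any $B\in(1,2]$ and any $p\in[1-1/B,1/B]$, the pair of itineraries of $p$ is admissible \emph{and non-null}, period. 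That cited theorem is the load-bearing ingredient; without it the corollary is not a bookkeeping exercise on top of Theorems~\ref{thm:BV} and~\ref{thm:inverse} but requires an independent (and nontrivial) entropy argument of the sort you only gesture at. So: right construction, right parametrization, right distinctness argument, but the non-nullity step needs \cite[Theorem 5.2]{BHV} rather than the case analysis you propose.
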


\begin{proof}  Given $B$, let $p$ be any real number such that $1-1/B \leq p \leq 1/B$, and consider the two 
functions $f_{-}$ and $f_{+}$ defined in Equations~(\ref{eq:f-}) and (\ref{eq:f+}), respectively.  
If $\alpha_p$ is the itinerary of the point $p$ for the function $f_{-}$, and $\beta_p$ is the itinerary of the point $p$ for the 
function $f_{+}$, then $(\alpha_p,\beta_p)$ is an admissible pair.  The proof of this fact appears in
\cite[Theorem 4.7]{BV}.  Therefore $R^{\bullet}_{(\alpha_p,\beta_p,-)}$ and $R^{\bullet}_{(\alpha_p,\beta_p,-)}$ are binary radix systems.  

For each $B$, however, there are infintely many choices for $p$.  Each choice of $p$ leads to a distinct
admissible pair $(\alpha_p,\beta_p)$ because the maps $p \mapsto \alpha_p$ and $p \mapsto \beta$ are increasing as a
function of $p \in [1- 1/B, 1/B]$.  To verifty that $p \mapsto \alpha_p$ is increasing (the proof for $p \mapsto \beta$ is similar), let $f_p = f_{(B,p,-)}$ and $\tau_p = \tau_{(B,p,-)}$.   Assume  $p' > p$ and let $x_n = f_{p}^n(p), \, x'_n = f_{p'}^n(p')$ and let $\alpha = \alpha_p, \, \alpha' = \alpha_{p'}$.   Note that $\alpha_p \neq \alpha_{p'}$; otherwise $|x'_n - x_n| = B^n|p'-p|$ for all $n$, which is not possible because $B>1$.  Therefore assume that $\alpha_k = \alpha'_k$ for $0\leq k\leq n-1$, but $\alpha_n \neq \alpha'_n$.  By elementary analytic geometry, if $\alpha_k = \alpha'_k = 1$, then 
$x_k - x_{k+1} > x'_n- x'_{k+1}>0$, and if   $\alpha_k = \alpha'_k = 0$, then $x'_{k+1} -x'_k >  x_{k+1} -  x_{k}>0$
for any $k$.  From this it is easy to deduce that if $\alpha_{n-1} = \alpha'_{n-1} = 1$,  then $\alpha_n = 0$ and $\alpha'_n = 1$ and hence $\alpha < \alpha'$, and if  $\alpha_{n-1} = \alpha'_{n-1} = 0$, then $\alpha_n = 0$ and $\alpha'_n = 1$ and hagain  $\alpha < \alpha'$.
\end{proof}

\section{Proof of Theorem~\ref{thm:converseR}} \label{sec:proofs}

This section contains the proof of Theorem~\ref{thm:converseR}.

\begin{proof} (Theorem~\ref{thm:converseR})    For $\Gamma \subseteq  \Omega^{\bullet}$, denote
$\widehat \Gamma = \{{\widehat\omega}\,: \, \omega \in \Gamma\}$.  Let 
$$\begin{aligned} \alpha &= \sup \, \{ \gamma \in {\widehat \Gamma} \, :  \, \gamma_0 = 0\}, \\ 
\beta &= \inf \, \{ \gamma  \in \widehat{\Gamma} \, : \, \gamma_0 = 1\}. \end{aligned}$$  
We first show that the pair $(\alpha,\beta)$ satisfies conditions (1) and (2) in Definition~\ref{def:allowable} of an allowable pair.  
It follows readily  from the definition of $\alpha$ and $\beta$ and from the shift invariance of $\Gamma$ that
\begin{equation*} \label{eq:1} 
S^n\alpha \notin (\alpha,\beta)\quad \text{and} \quad S^n\beta \notin (\alpha,\beta)\end{equation*}
for all $n\geq 0$, which is close to, but not quite, condition (2) in Definition~\ref{def:allowable} of an allowable pair.  Condition (1) in  Definition~\ref{def:allowable} holds because, by the shift invariance of $\Gamma$, 
 there are strings in $\Gamma_0$ that begin with $01$ and stings in $\Gamma_1$ that begin with $10$.  

To show that $(\alpha,\beta)$ satisfies condition (2) in Definition~\ref{def:allowable}, it only remains to prove that there is no $n\geq 0$ such that  $S^n\alpha = \beta$ and no $n\geq 0$ such that  $S^n\beta = \alpha$.
The fact that there is no element of $\widehat \Gamma$ between $\alpha$ and $\beta$ in the lexicographic order and that $\overset{\bullet}{\pi}c$ is increasing and surjective forces $\overset{\bullet}{\pi}(\bb \alpha) = \overset{\bullet}{\pi}(\bb \beta)$.  Moreover, either $\bb \alpha \in \Gamma$ or $\bb \beta \in \Gamma$, but not both.  We will assume that $\bb\alpha \in \Gamma$ and $\bb \beta \notin \Gamma$; the proof in the case that $\bb \beta \in \Gamma$ is essentially the same.  There is no $n\geq 0$ such that $S^n\alpha = \beta$; otherwise the fact that $\alpha \in {\widehat \Gamma}$ and the shift invariance of $\Gamma$ (and hence the shift invariance of $\widehat \Gamma$) would imply that $\beta \in {\widehat \Gamma}$, a contradiction.  
 Finally assume, by way of contradiction, that there is an $n\geq 0$ such that 
$S^n\beta = \alpha$.  Then $\beta = t \alpha \prec t\beta$, where $t$ is a finite string.  If there exists a $\gamma \in {\widehat \Gamma}$ such that $t\alpha = \beta \prec \gamma \prec t\beta$, then $\alpha = S^n(t\alpha) \prec S^n\gamma \prec S^n(t\beta) = \beta$.  But by the definition of $\alpha$ and $\beta$ as $\sup$ and $\inf$, there can be no such $\gamma \in  {\widehat \Gamma}$.  Therefore there is no such $\gamma$ with  $\beta \prec \gamma \prec t\beta$, which contradicts the definition of $\beta$ as $\inf \, \{ \gamma  \in \widehat{\Gamma} \, : \, \gamma_0 = 1\}$ since
$\beta \notin {\widehat \Gamma}$.

 The next claim is that  $\Gamma =  \Omega^{\bullet}_{(\alpha, \beta,-)}$.
The definition of $\alpha$ and $\beta$ implies that ${\widehat \Gamma}\subseteq \Omega_{(\alpha,\beta,-)}$. That $\Gamma$ is shift invariant further implies that ${\widehat \Gamma}\subseteq \Omega^0_{(\alpha,\beta,-)}$, and therefore that $\Gamma \subseteq \Omega^{\bullet}_{(\alpha,\beta,-)}$. To prove equality we take a somewhat circuitious route. 
Let $b = 1/B$ and define $\pi_b \, : \, \Omega \rightarrow [0,1]$ by
$$\pi_b (\omega):= (1-b) \, \sum_{n=0}^{\infty} \omega_ n \, b^n,$$
which is just the radix map in Equation~\eqref{eq:radixM1} except defined on all of $\Omega$.  
Define 
$$\Gamma' := \{ t \gamma \, :\, \gamma \in {\widehat \Gamma}, \, t \, \text{a finite string of} \, 1's \, \text{including the empty string}\}.$$
We claim that  $\pi_b \, :\,\Gamma' \rightarrow [0,1]$ is strictly increasing and surjective.  To see that
it is strictly increasing, note that, since $\overset{\bb}{\pi}$ is  strictly increasing on $\Gamma$,
the map $\pi_b \;: \,  {\widehat \Gamma} \rightarrow [0,q]$ is stringly increasing and surjective for some $0<q\leq 1$.  Moreover, if $\omega \in \Gamma'\setminus {\widehat \Gamma}$ and $\gamma \in {\widehat \Gamma}$, 
then, by Definition~\ref{def:address} of the address spaces (recall that $\Gamma \subseteq \Omega^{\bullet}_{(\alpha,\beta,-)}$), we have $0\gamma \preceq \alpha \prec 0\omega$.  Therefore
$\gamma \prec \omega$, i.e., every element of ${\widehat \Gamma}$ is less than every element of $\Gamma'\setminus {\widehat \Gamma}$. 
In addition, since  $0\gamma \preceq \alpha$, we have $\pi_b(\gamma) \leq (1/b)\, \pi_b(\alpha)$ by the fact that $\pi$ is stringly increasing on $\Gamma$.  And similarly, since $\alpha \prec 0\omega$, we have
$(1/b)\, \pi_b(\alpha) < \omega$.  Therefore $\pi_b(\gamma) < \pi_b(\omega)$.   Finally, if $\sigma$ and $\omega$ both lie in $\Gamma'\setminus {\widehat \Gamma}$ and
$\sigma \prec \omega$, we will show that $\pi_b(\sigma) < \pi_b(\omega)$. Let $\sigma = t_1 \gamma_1\in \Gamma'\setminus {\widehat \Gamma}$, where
$t_1$ is a string of $m$ ones and $\gamma_1\in {\widehat\Gamma}$, and $\omega = t_2  \gamma_2 \in \Gamma'\setminus {\widehat \Gamma}$, where $t_2$ is a string of $n\geq m$ ones and $\gamma_2\in {\widehat\Gamma}$. Then $t\gamma_2 \succ \gamma_1$, where $t$ is a string of $n-m$ ones.  Therefore $\pi_b(t\gamma_2) \succ \pi_b(\gamma_1)$ and hence $\pi_b(\omega) \succ \pi_b(\sigma)$.  Thus $\pi_b$ is  strictly increasing on $\Gamma'$.  

To show that $\pi_b \, :\,\Gamma' \rightarrow [0,1]$ is surjective, we express $\Gamma'$ as the union of non-overlapping intervals and show that the images of these intervals under $\pi_b$ leave no gaps. 
First note that the greatest element of $\widehat \Gamma$ is $S\alpha$; the smallest element of $1{\widehat \Gamma} \setminus {\widehat \Gamma}$ is $1S^2\alpha$ and the largest is $1S\alpha$;  the smallest element of $11{\widehat \Gamma} \setminus ({\widehat \Gamma} \cup 1{\widehat \Gamma})$ is  $11S^2\alpha$ and the largest is $11S\alpha$; etc.  However, $\pi_b(1S^2\alpha) = \pi_b(S\alpha)$, and therefore  $\pi_b(11S^2\alpha) = \pi_b(1S\alpha)$, etc.  Hence $\pi_b \, :\,\Gamma' \rightarrow [0,1]$ is surjective.  

To conclude the proof that $\Gamma =  \Omega^{\bullet}_{(\alpha, \beta,-)}$, call a map $\tau \, :\, [0,1] \rightarrow \Omega$ such that $\pi_b\circ \tau$ is the identify a {\it section} of $\pi_b \, :\Omega \rightarrow [0,1]$.  If $\tau([0,1])$ is shift invariant, then $\tau$ is called a {\it shift invariant section}.  In our case, the inverse of $\pi_b$ restricted to $\Gamma'$ is a shift invariant section.  Call this section $\tau_b$.  A {\it mask} (in our case) is a partition of the interval $[0,1]$ into two parts $M_0$ and $M_1$.  Given a mask $M = \{M_0,M_1\}$, define a function
$$f_{(B,M)} = \begin{cases}  Bx \quad & \text{if} \; x\in M_0 \\ BX+(1-B) \quad & \text{if} \; x\in M_1.$$
\end{cases}$$ The {\it itinerary map} $\tau_{(B,M)} \, : \, [0,1] \rightarrow \Omega$ is defined by
\begin{equation*}
[\tau_{(B,M}(x) ]_k = \begin{cases} 0 \quad \text{if} \;\;\; f_{(B,M)}^k(x)  \in M_0 \\ 1  \quad \text{if} \; \;\;f_{(B,M)}^k(x) \in M_1.  
\end{cases} \end{equation*}
 To continue the proof, we use  \cite[Theorem 4]{BV2} which states the following:  
the itinerary map $\tau_{(B,M)}$ is a  section of $\pi_b$, and conversely, every shift invariant section of $\pi_b$ is of the above form.  Hence $\tau_b  = \tau_{(B,M)}$ for some mask $M$.  Because $\tau_b$ is increasing and
because  $0\sigma \prec 1\omega$ for any $\sigma,\omega \in \Omega$, the mask must be of the form $M_0 = [0,p], \, M_1 = (p,1]$ for some $p\in (0,1)$.  In particular, $\tau_b= {\widehat \tau}_{(B,p,-)}$ as in Definition~\ref{def:itinerary} for some $p\in (0,1)$.
Let $\tau_-$ and $\tau_+$ denote the itineraries ${\widehat \tau}_{(B,p,-)}(p)$ and ${\widehat \tau}_{(B,p,+)}(p)$, respectively, of the point $p$.  Then  
$\Omega_{(\tau_-,\tau_+,-)} =   \tau_b([0,1]) =  \Gamma' \subseteq \Omega_{(\alpha,\beta,-)}$,  the first equality by \cite[Theorem 5.1]{BHV}.  If  $\Gamma'  \neq \Omega_{(\alpha,\beta,-)}$, then either $\tau_- \prec \alpha$ or $\tau_+ \succ \beta$.  In either case there is a contradiction to the definition of $\alpha$ as a sup or $\beta$ as an inf.  Therefore
 $\alpha = \tau_-, \, \beta = \tau_+$ and $\Gamma'  = \Omega_{(\alpha,\beta,-)}$, which implies that  ${\widehat \Gamma}  = \Omega^0_{(\alpha,\beta,-)}$, which in turn implies that $\Gamma = \Omega^{\bullet}_{(\alpha,\beta,-)}$.

To conclude thet proof that $(\alpha,\beta)$ is allowable,  we must prove condition (3) in  Definition~\ref{def:allowable}.   A result of Parry \cite[Page 373]{P} on the topological entropy of a dynamical system on the unit interval, where the function is of the form $f_{(B,p,\pm)}$ shown in Figure 1,  provides the first equality in 
$$0 < \log (B) = h(\tau_b([0,1])) = h(\Omega_{(\tau_-,\tau_+,-)}) = h(\Omega_{(\alpha,\beta,-)} ).$$
The first inequality is because $B>1$.  Since $(\alpha,\beta)$ is the pair of critical itineraries $(\tau_-,\tau_+)$ of functions of the form in Equations~\eqref{eq:f-} and \eqref{eq:f+}, and since the critical itineraries of such a function
are an admissible pair \cite[Theorem 4.7]{BV}, we now know that $(\alpha,\beta)$ is an admissible pair.  

To prove that $(\Gamma, B) = R^{\bullet}_{(\alpha,\beta,-)}$ it must be shown that
\begin{enumerate}
\item  $\Gamma =  \Omega^{\bullet}_{(\alpha, \beta,-)}$, and
\item $B =  B_{(\alpha,\beta)} := 1/r_{(\alpha,\beta)}$.
\end{enumerate}
We have already proved (1).   Concerning (2), it was part of the proof of Theorem~\ref{thm:inverse} that $\Omega_{(\alpha,\beta,-)} = \tau_{(B_{(\alpha,\beta)},\pi(\alpha),-)}([0,1])$, and it was shown above
that  $\Omega_{(\tau_-,\tau_+,-)} =  {\widehat \tau}_{(B,p,-)}$.  Again using the result  \cite[Page 373]{P}
we have
$$\begin{aligned}  \log (B) &=  h({\widehat \tau}_{(B,p,-)}([0,1])) =  h(\Omega_{(\tau_-,\tau_+,-)}) = h(\Omega_{(\alpha,\beta,-)} ) \\ &=
h( \tau_{(B_{(\alpha,\beta)},\pi(\alpha),-)}([0,1]) )= \log ( B_{(\alpha,\beta)}). \end{aligned}$$ 
\end{proof}

\section{Radix Tilings of the Real Line} \label{sec:tiling}

For an element $\omega \in \Omega^{\bullet}$,
let $\omega_{\bullet}$ denote the finite substring of $\omega$ to the left of the decimal point. 

\begin{definition}  Given a  binary radix system $(\Gamma, B)$  with radix map $\overset{\bullet}{\pi}$ and a finite string $s$, let 
$$T'_{s} := \{ \overset{\bullet}{\pi}(\omega) \, :  \, \omega \in \Gamma \; \; \text{and} \;\; \omega \bb= s\}.$$
Note that, for many values of $s$, the set $T'_{s}$ may be empty.  For instance, in Example~\ref{ex:2} the set $T'_{011} = \emptyset$.  If $T'_s \neq \emptyset$, then the closure $T_s$ of $T'_s$ is a closed interval which we call a {\bf tile}.  If $\Omega_F$ denotes the set of all finite binary strings, let
$${\cal T} := \{ T_s \, : \, s \in \Omega_F \}.$$
Then $\cal T$ is a collection of non-overlapping intervals whose union is $\R^+$. The set  $\cal T$ will be referred to as the {\bf tiling} of $\R^+$ for the binary radix system $(\Gamma,B)$.  For an $(\alpha,\beta)$-radix system, the corresponding
tiling is denoted by ${\cal T}_{(\alpha, \beta)}$.  The tiling for $R^{\bullet}_{(\alpha,\beta,-)}$ is the same as for $R^{\bullet}_{(\alpha,\beta,+)}$.
\end{definition}

\begin{example}[{\it Standard Radix System}]  \label{ex:two} For the standard binary radix system (Examples~\ref{ex:1},\;\ref{ex:11}, and \ref{ex:111}), the tiling is the set ${\cal T} = \{  [n,n+1] \, : \,\ n\geq 0 \}$ of unit length intervals. 
\end{example}

\begin{example}[{\it Golden Ratio Radix System}] \label{ex:golden}
 For the golden ratio based radix system (Examples~\ref{ex:2},\;\ref{ex:22}, and \ref{ex:222}), there are tiles of two lengths in the ratio $1\, : \, \frac{1+\sqrt{5}}{2}$.  There are tiles of length $1/\tau$ whose ``fractional part" (the part to the right of the decimal point) ranges from $_{\bb}000\cdots$ to $_{\bb}010101\dots$ and tiles of length $1$ whose ``fractional part"  ranges from $_{\bb}000\cdots$ to $_{\bb}101010\dots$,
The tiling $\cal T$ is a well known non-periodic tiling of the line.   If the tiles are denoted $1$ and $B$ (for their relative lengths), then the sequence of tiles in the tiling of $\R^+$, from left to right, is
\begin{equation} \label{B1}   B\, 1 \,B \,B\,1 \,B \,1 \,B \,B \,1 \,B \,B \,1 \,B \,1 \,B \,B \,1 \, B \,1 \,B \,B \,1 \cdots.
\end{equation}
This tiling is self-replicating in the following sense.  If $f_B \, : \, \R^+\rightarrow \R^+$ is the function
$f_B(x) = Bx$, then $B({\cal T}) := \{ f_B(T) \, : \, T \in {\cal T} \}$ is also a tiling of $\R^+$, and each tile
in   $B({\cal T})$ is the union of tiles in $\cal T$.  More specifically, each tile of type $B$ is an interval of 
the form $T_B  := [s0_{\bb}\overline{0}, s0_{\bb}\overline{10}]$ for some finite string $s$, and $B(T_b) =  [s00_{\bb}\overline{0}, s01_{\bb}\overline{01}] =  [s00_{\bb}\overline{0}, s00_{\bb}\overline{10}] \cup
 [s01_{\bb}\overline{0}, s01_{\bb}\overline{01}]$, which is the union of a tile of type $B$ and a tile of type $1$.
Likewise each tile of type $1$  is an interval of  the form $T_B  := [s_{\bb}\overline{0}, s_{\bb}\overline{01}]$ for some finite string $s$, and $B(T_1) =  [s0_{\bb}\overline{0}, s0_{\bb}\overline{10}]$ which is a tile of type
$B$. Therefore, the sequence (\ref{B1}) above can be recursively generated, 
starting from $B$ and using the substitution rules 
$$\begin{aligned}  B &\leftarrow B1 \\ 1 &\leftarrow B. \end{aligned}$$  In other words, the tiling is
recursively generated as follows:
$$B \rightarrow B1 \rightarrow B1B \rightarrow B1BB1 \rightarrow B1BB1B1B \rightarrow\cdots.$$   
\end{example}

\begin{definition} With notation as in the example above,  call a tiling ${\cal T}$ {\bf self-replicating} if each tile
in $B({\cal T})$ is the union of tiles in $\cal T$.  
\end{definition}

\begin{theorem} If $(\alpha,\beta)$ is an admissible pair, then 
\begin{enumerate}
\item the tiling  ${\cal T}_{(\alpha,\beta)}$ is self-replicating, and
\item if $\alpha$ and $\beta$ are eventually periodic, then there are at most  finitely many
lengths of tiles in the tiling ${\cal T}_{(\alpha,\beta)}$.
\end{enumerate}
\end{theorem}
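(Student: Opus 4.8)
The plan is to handle the two parts separately; both rest on the observation that multiplication by the base $B:=B_{(\alpha,\beta)}$ moves the decimal point one place to the right. Writing $b=1/B$ and $\omega=\omega_0\cdots\omega_N\bullet\omega_{N+1}\omega_{N+2}\cdots$, one reads off from condition~3 of Definition~\ref{def:radix} that
$$B\,\pi_{(\alpha,\beta)}(\omega)=\pi_{(\alpha,\beta)}(\omega'),\qquad \omega':=\omega_0\cdots\omega_N\omega_{N+1}\bullet\omega_{N+2}\cdots,$$
and that $\omega'\in\Omega^{\bullet}_{(\alpha,\beta,-)}$ whenever $\omega$ is, since $\widehat{\omega'}=\widehat\omega$ and $\Omega^{\bullet}_{(\alpha,\beta,-)}$ is shift invariant (Proposition~\ref{prop:shift}); conversely any element of $\Omega^{\bullet}_{(\alpha,\beta,-)}$ whose integer part is $s0$ or $s1$ arises this way from one whose integer part is $s$. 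I work throughout with $R(\alpha,\beta,-)$; as noted in the text the two radix systems share the same tiling, so this is no loss, and I take $(\alpha,\beta)$ non-null, as is needed for $\mathcal T_{(\alpha,\beta)}$ to be defined.

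For part~1, the displayed identity says that at the level of the pre-tile sets $f_B(T'_s)=T'_{s0}\cup T'_{s1}$. Since $f_B\colon\R^+\to\R^+$ is a homeomorphism it commutes with closure, so $f_B(T_s)=T_{s0}\cup T_{s1}$, and at least one of $T_{s0},T_{s1}$ is a genuine tile because $T'_s\neq\emptyset$ forces $T'_{s0}\cup T'_{s1}\neq\emptyset$. Thus every element $f_B(T_s)$ of $B(\mathcal T_{(\alpha,\beta)})$ is a union of one or two tiles of $\mathcal T_{(\alpha,\beta)}$, which is precisely the definition of self-replicating; that $B(\mathcal T_{(\alpha,\beta)})$ is itself a tiling is automatic, $f_B$ being a homeomorphism.

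For part~2, I first record an exact formula for tile lengths. Splitting $\pi_{(\alpha,\beta)}(\omega)$ into the contribution of the integer part $s=s_0\cdots s_N$ and that of the fractional tail $w=\omega_{N+1}\omega_{N+2}\cdots$, one gets $\pi_{(\alpha,\beta)}(\omega)=C_s+\tfrac{b}{1-b}\,\pi_b(w)$ with $C_s$ depending only on $s$ and $\pi_b$ the projection map of Definition~\ref{def:proj}. Hence, with $W_s:=\{w\in\Omega: sw\in\Omega^{0}_{(\alpha,\beta,-)}\}$,
$$|T_s|=\frac{b}{1-b}\Bigl(\sup_{w\in W_s}\pi_b(w)-\inf_{w\in W_s}\pi_b(w)\Bigr).$$
Next I unwind the condition $sw\in\Omega^{0}_{(\alpha,\beta,-)}$ into constraints on $w$. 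The requirements $S^n(sw)\notin(\alpha,\beta]$ for $n\ge N+1$ say exactly $w\in\Omega_{(\alpha,\beta,-)}$. For $n\le N$, and for the extra requirement $0sw\preceq\alpha$, a short lexicographic case analysis comparing the relevant suffix $s_n\cdots s_N$ (resp.\ $0s_0\cdots s_N$) of length $\ell$ with $\alpha|_\ell$ and $\beta|_\ell$ shows that each such requirement is either vacuous, or forces $W_s=\emptyset$ (so there is no tile $T_s$), or imposes exactly one bound of the form $w\preceq S^\ell\alpha$ or $w\succ S^\ell\beta$. Consequently, whenever $T_s$ is a tile, $W_s=\{w\in\Omega_{(\alpha,\beta,-)}: L_s\prec w\preceq U_s\}$, where $U_s:=\min\{S^\ell\alpha: \ell \text{ such that this bound is imposed}\}$ (or $U_s=\overline1$ if none) and $L_s$ is the $\preceq$-maximum of the imposed $S^\ell\beta$ (or a vacuous lower bound if none).

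Finally the hypothesis enters: if $\alpha$ and $\beta$ are eventually periodic then $\{S^\ell\alpha:\ell\ge0\}$ and $\{S^\ell\beta:\ell\ge0\}$ are finite sets (a string with pre-period $m$ and period $q$ has at most $m+q$ distinct shifts). Therefore $U_s$ and $L_s$ range over finitely many values as $s$ varies, so $W_s$ — determined by the pair $(U_s,L_s)$ together with the fixed set $\Omega_{(\alpha,\beta,-)}$ — takes finitely many values, and hence so does $|T_s|$. This gives the finitely many tile lengths. I expect the real work to be the bookkeeping in the case analysis for $W_s$: getting the borderline comparisons ($s_n\cdots s_N$ equal to a prefix of $\alpha$ or of $\beta$) right, tracking the strict-versus-nonstrict inequalities forced by the definition of $\Omega^{0}_{(\alpha,\beta,-)}$, and confirming that no further constraints on $w$ have been overlooked; once $W_s$ is known to have the stated shape, the finiteness conclusion is immediate.
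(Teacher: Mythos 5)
Your proof is correct, and the underlying mechanism --- multiplication by $B$ shifts the decimal point one place right, combined with shift invariance of the address space --- is the same one the paper uses. For part 1, though, your route is genuinely cleaner and more local: you prove the pre-tile identity $f_B(T'_s)=T'_{s0}\cup T'_{s1}$ directly and take closures, so that each element of $B(\mathcal T_{(\alpha,\beta)})$ is visibly a union of one or two tiles. The paper instead looks at the set $X$ of right endpoints of tiles, identifies $\tau_{(\alpha,\beta,-)}(x_n)$ as a string whose fractional part is a shift of $\alpha$, and then shows $Bx_n\in X$; this requires establishing up front that the right endpoint of every tile is $\pi$ of such a string, a structural fact your part-1 argument does not need. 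The trade-off is that the paper gets to reuse that structural fact in part 2 (right endpoint fractional part $=\bullet\,S^m\alpha$, left endpoint fractional part $=\bullet\,S^n\beta$, so tile length $=\pi_{(\alpha,\beta)}(\bullet\,S^m\alpha)-\pi_{(\alpha,\beta)}(\bullet\,S^n\beta)$), whereas you have to redo the analysis for part 2 via the constraint set $W_s$. Your part-2 argument is the same in substance --- tile length depends only on finitely many shifted strings when $\alpha,\beta$ are eventually periodic --- but you carry out the lexicographic case analysis explicitly, which is more careful than the paper's one-line assertion. Your remark that $(\alpha,\beta)$ must also be non-null (so that $B_{(\alpha,\beta)}$ and the tiling exist) is a correct reading of an implicit hypothesis the theorem statement omits.
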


\begin{proof} Let $(\Gamma, B)$ be the binary radix system with radix map $\overset{\bullet}{\pi}$.  Concerning statement (1), consider the set $X = \{x_1, x_2, \dots \}$ of right endpoints of the tiles in 
 ${\cal T}_{(\alpha,\beta)}$. It suffices to prove that $Bx_n \in X$ for all $n\geq 1$.   
For any $n\geq 1$, the decimal representation of $x_n$ must be of the form 
\begin{equation*} \label{eq:R} \tau_{(\alpha,\beta,-)}(y) =  v \alpha_0 \alpha_1 \cdots \alpha_{N} {\bb} \alpha_{N+1}\alpha_{N+2} \cdots\end{equation*} for some finite string $v$ and some $N \geq 0$. 
 To see this, suppose that $x_n$ is the right endpoint of the tile $T = \{ \overset{\bullet}{\pi}(\omega)\, :\, \omega_{\bullet} = s\}$ and note that $s$ has the form  $s = s' \alpha_0 \alpha_1 \cdots  \alpha_N$ for some finite string $s'$ and some $N$.  Therefore any point in $T$ has a decimal representation of the form 
 $$s' \alpha_0 \alpha_1 \cdots \alpha_N{\bb}\omega_1\omega_2 \cdots \preceq s' \alpha_0 \alpha_1 \cdots \alpha_N{\bb}\alpha_{N+1} \alpha_{N+2} \cdots \in \Omega^{\bullet}_{(\alpha,\beta)}$$ for some string $\omega_1\omega_2 \cdots $.  Conversely, any point with decimal representation \linebreak
 $v \alpha_0 \alpha_1 \cdots \alpha_{N} {\bb} \alpha_{N+1}\alpha_{N+2} \cdots \in \Omega^{\bullet}_{(\alpha,\beta)}$
is a right enpoint of some tile in  ${\cal T}_{(\alpha,\beta)}$ because any decimal $v \alpha_0 \alpha_1 \cdots \alpha_N{\bb}\omega_1\omega_2 \cdots \in  \Omega^{\bullet}_{(\alpha,\beta)}$ is less than
$v \alpha_0 \alpha_1 \cdots \alpha_{N} {\bb} \alpha_{N+1}\alpha_{N+2} \cdots$ 
  in the lexicographic order.  Therefore $Bx_n \in X$ since the decimal representation of $Bx_n$ is obtained by
shifting the decimal point one space:
 $v \alpha_0 \alpha_1 \cdots \alpha_{N}  \alpha_{N+1}{\bb}\alpha_{N+2} \cdots$. 

Concerning statement (2), from the form of the decimal representation of the endpoints of the intervals of the tiling (as
discussed above), the lengths of the tileshave values $\overset{\bullet}{\pi} (\bb S^m \alpha) -\overset{\bullet}{\pi}(\bb S^n\beta)$ for some $n,m\geq 0$.  Since $\alpha$ and $\beta$ are eventually periodic, there are at most finitely many shifted strings $S^m\alpha$ and $S^n\beta$. 
\end{proof}

\begin{example} This is a continuation of Example~\ref{ex:3} in which three distinct $(\alpha,\beta)$-radix systems have the same base $B$ such that  $B^3-B-1 = 0$.  The three associated self-replicating tilings, however, are distinct.  From the self-replicating property, in a similar way as was as was done in Example~\ref{ex:golden}, substitution rules for the three tilings can be determined.  

In the notation of  Example~\ref{ex:3}, for the tiling ${\cal T}_{(\alpha_1,\beta_1)}$ there are five tile types $T_0,T_1,T_2,T_3,T_4$ of relative lengths
$1, B ,B^2, B^3, B^4$, respectively.  To simplify notation we refer to tile $T_i$ as simply $i$.  Then the  substitution rules are:
$$  0 \leftarrow 1  \qquad \qquad  1 \leftarrow 2   \qquad \qquad 2 \leftarrow 3     \qquad \qquad  3 \leftarrow 4     \qquad \qquad 4 \leftarrow 40,$$ 
and the tiling begins, from left to right:
$$4\, 0\, 1\,2\,3\,4\,4\,0\,4\,0\,1\,4\,0\,1\,2\,4\,0\,1\,2\,3\,4\,\,0\,1\,2\,3\,4\,4\,0\,1\,2\,3\,4\,4\,0 \dots.$$
For the tiling ${\cal T}_{(\alpha_2,\beta_2)}$ there are 4 tile types $T_0,T_1,T_2,T_3$ of relative lengths
\linebreak
$1, B ,B^2, B^6$, respectively.   The  substitution rules are:
$$  0 \leftarrow 1  \qquad \qquad  1 \leftarrow 2   \qquad \qquad 2 \leftarrow 10     \qquad \qquad  3 \leftarrow 32,$$ 
and the tiling begins, from left to right:
$$3\, 2\, 1\,0\,2\,1\,1\,0\,2\,2\,1\,1\,0\,1\,0\,2\,2\,1\,2\,1\,1\,\,0\,1\,0\,2\dots.$$
For the tiling ${\cal T}_{(\alpha_3,\beta_3)}$ there are 4 tile types $T_0,T_1,T_2,T_3$ of relative lengths \linebreak
$1, B ,B^2, B^5$, respectively.   The  substitution rules are:
$$  0 \leftarrow 1  \qquad \qquad  1 \leftarrow 2   \qquad \qquad 2 \leftarrow 01     \qquad \qquad  3 \leftarrow 31,$$ 
and the tiling begins, from left to right:
$$3\, 1\, 2\,0\,1\,1\,2\,2\,0\,1\,0\,1\,1\,2\,1\, 2\,2\,0\,1\,2\,0\,\,1\,0\,1\,1\,2\dots.$$
\end{example}

All three of the above tilings are self-replicating under expansion by the factor $B$.

\end{document}